\documentclass[12pt,a4paper]{article}
\usepackage{amssymb,amsmath,amsthm}
\usepackage{latexsym}
\usepackage[mathscr]{eucal}
\usepackage{color}

\newtheorem{theorem}{Theorem}
\newtheorem{lemma}{Lemma}
\newtheorem{proposition}{Proposition}

\newtheorem{remark}{Remark}
\numberwithin{equation}{section}
\numberwithin{theorem}{section}
\numberwithin{lemma}{section}
\numberwithin{proposition}{section}
\numberwithin{corollary}{section}
\numberwithin{remark}{section}

\begin{document}
\title{Large time decay of the Oseen flow in exterior domains subject to the
Navier slip-with-friction boundary condition}
\author{Toshiaki Hishida \\
Graduate School of Mathematics \\
Nagoya University \\
Nagoya 464-8602, Japan \\
\texttt{hishida@math.nagoya-u.ac.jp} \\
}
\date{Dedicated to the memory of Professor Hermann Sohr}
\maketitle
\begin{abstract}
Consider the motion of a viscous incompressible fluid filling a 3D exterior domain $\Omega$ subject to 
the Navier slip-with-friction boundary condition as well as outflow at infinity. 
For the Oseen system as the linearization, we discuss the resolvent set under a certain relationship among 
the geometry of the boundary $\partial\Omega$, friction coefficient $\alpha(x)$ and the outflow $u_\infty$.
We then study the regularity of the resolvent near the origin in the complex plane to develop
$L^q$-$L^r$ decay estimates of the Oseen semigroup provided that  
$\alpha(x)+u_\infty\cdot\nu(x)/2\geq 0$ for every $x\in\partial\Omega$, 
where $\nu(x)$ stands for the outward unit normal to the boundary $\partial\Omega$.

\noindent
MSC: 35B40, 76D07

\noindent
Keywords: Oseen flow, resolvent, $L^q$-$L^r$ estimate, exterior domain, Navier boundary condition. 
\end{abstract}

\section{Introduction}
\label{intro}

Let us consider the Navier-Stokes flow in the exterior domain $\Omega$ of an obstacle in $\mathbb R^3$
with smooth boundary $\partial\Omega$ of class $C^{2,1}$ subject to the Navier slip-with-friction boundary condition,
while outflow $u_\infty\in \mathbb R^3$ (constant vector) is prescribed at spatial infinity:
\begin{equation}
\partial_tu+u\cdot\nabla u=\Delta u-\nabla p, \quad \mbox{div $u$}=0, 
\label{NS}
\end{equation}
\begin{equation}
\nu\cdot u|_{\partial\Omega}=0, \quad [(2\mathbb Du)\nu]_\tau+\alpha u_\tau|_{\partial\Omega}=0, \quad \lim_{|x|\to\infty}u=u_\infty,
\label{BC}
\end{equation}
where $u(x,t)\in\mathbb R^3$ and $p(x,t)\in\mathbb R$ are the velocity and pressure of the fluid,
\[
\mathbb Du=\frac{\nabla u+(\nabla u)^\top}{2} 
\]
is the symmetric gradient (deformation tensor) with $(\cdot)^\top$ being the transpose,
$\nu$ stands for the outward unit normal to $\partial\Omega$ and 
$\alpha(x)\geq 0$ denotes the friction coefficient at $x\in\partial\Omega$.
Given vector field $W$ at the boundary $\partial\Omega$, here and in what follows, its
tangential component $W_\tau$ is 
defined by
\[
W_\tau=W-(\nu\cdot W)\nu
=(\nu\times W)\times \nu.
\]

The condition $\mbox{\eqref{BC}}_2$ arises from
\begin{equation}
[\mathbb T(u,p)\nu]_\tau+\alpha u_\tau|_{\partial\Omega}=0
\label{BC1}
\end{equation}
with $\mathbb T(u,p)=2\mathbb Du-p\mathbb I$ being the Cauchy stress tensor, where $\mathbb I$ is
the unity matrix, nevertheless, it does not involve the pressure actually. 
When $\alpha>0$, we see from \eqref{BC1} that 
the tangential component of the 
velocity
is proportional to the one of the normal stress exerted by the fluid
with the proportional coefficient $1/\alpha$, which is called the slip length.
Since $(\nu\times (2\mathbb Du)\nu)\cdot\nu+\alpha (\nu\times u)\cdot\nu=0$,
the condition $\mbox{\eqref{BC}}_{1,2}$ is equivalent to
\begin{equation*}
\nu\cdot u|_{\partial\Omega}=0, \quad\nu\times(2\mathbb Du)\nu+\alpha\nu\times u|_{\partial\Omega}=0.
\end{equation*}
It is remarkable that the condition $\mbox{\eqref{BC}}_{1,2}$ can be reformulated in terms of the vorticity
at $\partial\Omega$ unlike the no-slip condition $u|_{\partial\Omega}=0$
which is widely adopted, see Chen and Qian \cite{cq10} for the details.
The particular case $\alpha=0$ is the full slip condition, whereas the no-slip one
is recovered formally when $\alpha\to\infty$.

There are some regimes in which the Navier condition would be suitable rather than the no-slip one.
One of them is related to
the occurrence of collision in a finite time of rigid bodies moving into a fluid within the context of fluid-structure interaction,
which is somehow impossible under the no-slip condition \cite{HT09}
in contrast to the case of the Navier one \cite{GHW15}.
Starting
from the work \cite{SoS73} by Solonnikov and Scadilov,
the Stokes and Navier-Stokes systems subject to the Navier boundary condition are
extensively studied from several 
points of view, see, for instance,
\cite{A21,A14,CLT24,FR16,FR19,Gi82,J25,KLT25,SS07,Shi07,SY08,W03}
and the references cited therein, however, to the best of our knowledge, 
the outflow $u_\infty$ is assumed to be zero for the exterior problem in the existing literature.

If we replace $u$ by $u_\infty+u$ in \eqref{NS}--\eqref{BC} 
to rewrite the system around $u_\infty$, 
eliminate the nonlinear term, replace the resulting inhomogeneous boundary 
condition
by the homogeneous one, 
set $\eta=-u_\infty$ and finally add the initial condition,
then we are led to the Oseen initial value problem 
\begin{equation}
\partial_tu=\Delta u+\eta\cdot\nabla u-\nabla p, \quad \mbox{div $u$}=0 \quad\mbox{in $\Omega\times (0,\infty)$},
\quad u(\cdot,0)=f,
\label{Os-iv}
\end{equation}
subject to
\begin{equation}
\nu\cdot u|_{\partial\Omega}=0, \quad 
[(2\mathbb Du)\nu]_\tau+\alpha u_\tau|_{\partial\Omega}=0, \quad
\lim_{|x|\to \infty}u=0.
\label{BC2}
\end{equation}
The resolvent system associated with \eqref{Os-iv}--\eqref{BC2} is given by
\begin{equation}
\lambda u-\Delta u-\eta\cdot\nabla u+\nabla p=f, \qquad \mbox{div $u$}=0,
\label{Os-re}
\end{equation}
subject to \eqref{BC2}.

The objective of the present paper is twofold.
Firstly, we discuss the resolvent set within the $L^q$-framework under the relationship \eqref{fric0} below among 
the geometry of $\Omega$, the friction $\alpha$ and $\eta\in\mathbb R^3\setminus\{0\}$.
In view of the symbol of the solution to \eqref{Os-re} in the whole space $\mathbb R^3$, see \eqref{formula},
one would expect that the resolvent set contains $\mathbb C\setminus S_\eta$ with
\begin{equation}
S_\eta:=\{\lambda\in\mathbb C;\; |\eta|^2\mbox{Re $\lambda$}+(\mbox{Im $\lambda$})^2\leq 0\}.
\label{os-spec-0}
\end{equation}
Indeed, under the no-slip boundary condition, it was shown by Kobayashi and Shibata \cite{KS98} that it is actually the case
for every $\eta\in\mathbb R^3\setminus\{0\}$.
It is interesting to ask whether or not the same thing holds under the Navier 
boundary condition.
In this paper, we will show that it is indeed true provided
\begin{equation}
\alpha(x)+\min\{\kappa_0(x),0\}\geq\frac{\eta\cdot\nu(x)}{2}
\label{fric0}
\end{equation}
for every $x\in\partial\Omega$, where
$\kappa_0(x)$ is the minimum of two principal curvatures of the boundary $\partial\Omega$
at $x$ in the direction of $\nu(x)$.
A key step for the proof is to show
the uniqueness of solutions (within a reasonable class) to \eqref{BC2}--\eqref{Os-re}
with $\lambda \in (\mathbb C\setminus S_\eta)\cup \{0\}$ under \eqref{fric0}.

The second objective is to deduce $L^q$-$L^r$ estimates of the solution, given by the Oseen semigroup, 
to \eqref{Os-iv}--\eqref{BC2}. 
As is well known, such decay estimates are useful to establish the global well-posedness of the Navier-Stokes
system \eqref{NS}--\eqref{BC} with initial data close to a steady state
in connection with its stability. 
If, in particular, the boundary condition is the full slip one $\alpha=0$, then
it turns out that
the only case when the desired large time decay is
available is the Stokes system $\eta=0$, for which the result was already shown by Shimada and Yamaguchi \cite{SY08}
(their argument works for general case $\alpha\geq 0$ as well, see \cite[Remark 1.4]{SY08}).
On the other side in which $\alpha\to\infty$ formally, large time decay of the Oseen semigroup subject
to the no-slip condition was well developed by Kobayashi and Shibata \cite{KS98}.
Our result may be regarded as 
a contribution that fills in between and recovers the result of \cite{SY08} for the Stokes semigroup as a particular case.
To be precise,
the desired $L^q$-$L^r$ estimates, in which the restriction on the summability exponents is the same
as in the no-slip case \cite{H20,H21,I89,KS98,MS98,S08} in 3D, are established in this paper under less condition
\begin{equation}
\alpha(x)\geq \frac{\eta\cdot\nu(x)}{2}
\label{fric00}
\end{equation}
for every $x\in\partial\Omega$ than \eqref{fric0}. 
The condition \eqref{fric00} allows us to show the uniqueness of solutions to \eqref{BC2}--\eqref{Os-re} for every $\lambda$
that lies in the right-half complex plane including the imaginary axis.
In fact, analysis of the resolvent for such $\lambda$,
especially near $\lambda=0$ along the imaginary axis,
is enough as well as crucial for the large time behavior although the
aforementioned result on the resolvent set ($\supset \mathbb C\setminus S_\eta$)
under \eqref{fric0} is of independent interest.

The proof is based on the local energy decay properties together with a cut-off procedure as in
\cite{ES04,ES05,H04,H16,H20,H21,HS09,Ma21,S08,SY08}; indeed, under the no-slip condition, this strategy is
traced back to Kobayashi and Shibata \cite{KS98}
for the Oseen system, that covers the Stokes one as well,
and, even before, to Iwashita \cite{I89} for the Stokes system.
Once we have the local energy decay estimate, on which we focus in this paper, then the subsequent procedure leads to the result
along the same argument as in the papers above (and thus may be omitted).
It is 
worthwhile
mentioning that there is 
another proof due to
Maremonti and Solonnikov \cite{MS98} for the Stokes semigroup subject to the no-slip condition.
In \cite{H18,H20} the present author developed an alternative route 
without spectral analysis to show the local energy decay, see \cite[Proposition 6.1]{H20},
for the non-autonomous system subject to the no-slip condition, and the subsequent procedure 
involves a nontrivial issue on the regularity
of $\partial_tu$ as well as the pressure (which corresponds to \cite[Theorem 1.4]{SS07}
under the Navier boundary condition) 
because of presence of rotation of the obstacle as well as the non-autonomous 
character.
For the Oseen semigroup under consideration, one
can get around such a regularity issue by making use of analyticity of the semigroup, see the references mentioned above.

The paper is organized as follows.
In the next section we present main results:
Theorem \ref{thm-0} on the resolvent set, Theorem \ref{thm} on the $L^q$-$L^r$ estimates.
After preparatory results in section \ref{pre}, section \ref{int} is devoted to analysis of the interior problem.
In section \ref{ext} we construct a parametrix of the resolvent in exterior domains and investigate its regularity near $\lambda =0$.
Theorem \ref{thm-0} is rephrased in Proposition \ref{prop-resol} and the proof is given there.
The local energy decay estimate is proved in the final section, which leads us to Theorem \ref{thm}.

\section{Results}
\label{result}

Let us begin with introducing notation.
Given a domain $G\subset\mathbb R^3$, $q\in [1,\infty]$ and integer $k\geq 0$, the standard Lebesgue and Sobolev spaces
are denoted by $L^q(G)$ and by $W^{k,q}(G)$.
When $q=2$, we write $H^k(G)=W^{k,2}(G)$.
We abbreviate the norm $\|\cdot\|_{q,G}=\|\cdot\|_{L^q(G)}$ and even $\|\cdot\|_q=\|\cdot\|_{q,\Omega}$, where
$\Omega$ is the exterior domain under consideration with $C^{2,1}$-boundary $\partial\Omega$.
Without loss, we assume that
\[
\mathbb R^3\setminus\Omega\subset B_1
\]
where $B_R$ denotes the open ball centered at the origin with
radius $R>0$.
We set $\Omega_R=\Omega\cap B_R$ for $R\geq 1$.
The class $C_0^\infty(G)$ consists of all $C^\infty$ functions with compact support in $G$, then $W^{k,q}_0(G)$
denotes the completion of $C_0^\infty(G)$ in $W^{k,q}(G)$, where $k>0$ is an integer.
In what follows we adopt the same symbols for denoting scalar and vector (even tensor) function spaces
as long as there is no confusion.

Let $X$ and $Y$ be two Banach spaces.
Then ${\mathcal L}(X,Y)$ stands for the Banach space consisting of all bounded linear operators from $X$ into $Y$.
We simply write ${\mathcal L}(X)={\mathcal L}(X,X)$.

We introduce the solenoidal function spaces over the exterior domain $\Omega$ with $C^{2,1}$-boundary $\partial\Omega$.
The class of $C_{0,\sigma}^\infty(\Omega)$ consists of all solenoidal vector fields being in $C_0^\infty(\Omega)$.
By $L^q_\sigma(\Omega)$ we denote the completion of $C_{0,\sigma}^\infty(\Omega)$ in $L^q(\Omega)$, then it is characterized as
\[
L^q_\sigma(\Omega)=\big\{u\in L^q(\Omega);\; \mbox{\rm div $u$}=0,\;\nu\cdot u|_{\partial\Omega}=0\big\}.
\]
The space of $L^q$-vector fields admits the Helmholtz decomposition
\[
L^q(\Omega)=L^q_\sigma(\Omega)\oplus \{\nabla p\in L^q(\Omega);\; p\in L^q_{\rm loc}(\overline{\Omega})\}
\]
which was proved by \cite{FM77, M82, SiSoh92}.
We denote by $P=P_q: L^q(\Omega)\to L^q_\sigma(\Omega)$ the Fujita-Kato projection 
associated
with the decomposition above.
Then we have $P_q^*=P_{q^\prime}$, where $1/q^\prime+1/q=1$.
Finally, several positive constants are denoted by $C$, which may change from line to line.

Let $1<q<\infty$, and
let us introduce the Stokes operator subject to the Navier 
boundary condition by
\begin{equation*}
\left\{
\begin{array}{l}
D(A)=\{u\in L^q_\sigma(\Omega)\cap W^{2,q}(\Omega);\; [(2\mathbb Du)\nu]_\tau+\alpha u_\tau=0\;\mbox{on $\partial\Omega$}\},  \\
Au=-P\Delta u,
\end{array}
\right.
\end{equation*}
where $\alpha\in C(\partial\Omega)$ is a given nonnegative function describing the friction at the boundary $\partial\Omega$.
If we denote by $A_q$ the operator $A$ acting on the space $L^q_\sigma(\Omega)$, 
we then see the duality relation $A_q^*=A_{q^\prime}$. 
Due to Shibata and Shimada \cite{SS07}, the operator $-A$ generates an analytic semigroup 
$\{e^{-tA}\}_{t\geq 0}$ of class
$(C_0)$ on the space $L^q_\sigma(\Omega)$
for every $q\in (1,\infty)$.
To be precise, 
for constant $\alpha\geq 0$,
they showed the following as well as
$\mathbb C\setminus (-\infty,0]\subset \rho(-A)$:
For every $\varepsilon\in (0,\frac{\pi}{2})$
and $\delta>0$,
there is a constant $C=C(\varepsilon,\delta)>0$ such that
$u=(\lambda+A)^{-1}f$ and the associated pressure $p=p(\lambda)$ 
enjoy
\begin{equation}
|\lambda|\|u\|_q 
+|\lambda|^{1/2}\|\nabla u\|_q 
+\|\nabla^2u\|_q 
+\|\nabla p\|_q\leq C\|f\|_q
\label{st-est}
\end{equation}
for all $f\in L^q_\sigma(\Omega)$ and $\lambda\in \Sigma_{\varepsilon,\delta}$, 
where
\[
\Sigma_{\varepsilon,\delta}:=\{\lambda\in \mathbb C;\; |\mbox{arg $\lambda$}|\leq \pi-\varepsilon,\, |\lambda|\geq \delta\}.
\]
Even for the case $\alpha\in C^1(\partial\Omega)$, the generation of the semigroup
is readily deduced as a corollary of their results:
in fact, thanks to the generalized resolvent estimate, see \cite[Theorem 1.3]{SS07},
for the 
system \eqref{Os-re} with $\eta=0$ subject to the full slip condition \eqref{BC2} ($\alpha=0$)
in which the homogeneous one is replaced by the inhomogeneous condition
$[(2\mathbb Du)\nu]_\tau=h$,
we apply their estimate to \eqref{BC2}--\eqref{Os-re} with $\alpha\in C^1(\partial\Omega)$ as well as $\eta=0$
by regarding $h=-\alpha u_\tau$, so that the desired a priori estimate \eqref{st-est} for large $|\lambda|$ is available.
Note that \eqref{st-est} 
implies
\begin{equation}
\|w\|_{W^{2,q}(\Omega)}\leq c\big(\|Aw\|_q+\|w\|_q\big) 
\label{st-elli}
\end{equation}
for every $w\in D(A)$ with some $c>0$.
In addition,
Shimada and Yamaguchi 
\cite{SY08} succeeded in deducing 
$L^q$-$L^r$ decay estimate \eqref{decay} below.
Although the full slip case $\alpha=0$ was discussed in \cite{SY08}, their argument works for 
constant
$\alpha\geq 0$.
The result is recovered in Theorem \ref{thm} as a particular case.

Given a constant vector $\eta\in\mathbb R^3$, 
the Oseen operator subject to the Navier 
boundary condition is now defined by
\[
D(L)=D(A), \qquad
Lu=L_\eta u=
-P[\Delta u+\eta\cdot\nabla u].
\]
Then we immediately find from \eqref{st-elli} together with the interpolation inequality that
\begin{equation}
\|w\|_{W^{2,q}(\Omega)}\leq c\big(\|Lw\|_q+\|w\|_q\big)
\label{os-elli}
\end{equation}
for every $w\in D(L)$ with some $c>0$.
Using \eqref{st-est} by means of the standard perturbation argument in terms of the Neumann series,
we readily see that, for every $\varepsilon\in (0,\frac{\pi}{2})$, there are constants $\lambda_0=\lambda_0(\varepsilon)>0$
and $C=C(\varepsilon)>0$ such that
\begin{equation}
\Sigma_{\varepsilon,\lambda_0}\subset\rho(-L)
\label{pre-resol}
\end{equation}
together with
\[
\|(\lambda+L)^{-1}f\|_q\leq 
\frac{C}{|\lambda|}\|f\|_q
\]
for all $f\in L^q_\sigma(\Omega)$ and $\lambda\in\Sigma_{\varepsilon,\lambda_0}$, which implies that the operator $-L$
generates an analytic semigroup $\{e^{-tL}\}_{t\geq 0}$
on $L^q_\sigma(\Omega)$ for every $q\in (1,\infty)$.

What interests us first of all is whether the spectral parameter $\lambda$ near the origin belongs to the resolvent set $\rho(-L)$.
Indeed, under the no-slip condition, we know from Kobayashi and Shibata \cite[Theorem 4.4]{KS98} 
that $\mathbb C\setminus S_\eta\subset \rho(-L)$
for every $\eta\in\mathbb R^3\setminus\{0\}$, where $S_\eta$ is given by \eqref{os-spec-0}.
There is further information for the no-slip case due to Farwig and Neustupa \cite[Theorem 1.2]{FN10}: 
$S_\eta$ is exactly the essential spectrum  
for all $\eta\in\mathbb R^3\setminus\{0\}$ and $q\in (1,\infty)$.
However, it does not seem to be always the case under the Navier boundary condition. 
In fact, in the following theorem, one needs the condition \eqref{ass-fric} or even \eqref{ass-fric0} in order that
the desired result is available.
\begin{theorem}
Suppose that a constant vector $\eta\in\mathbb R^3\setminus\{0\}$ 
and a nonnegative function $\alpha\in C(\partial\Omega)$ fulfill the relation
\begin{equation}
\alpha(x)\geq \frac{\eta\cdot\nu(x)}{2}
\label{ass-fric}
\end{equation}
for every $x\in\partial\Omega$, where $\nu(x)$ denotes the outward unit normal to 
the boundary $\partial\Omega\in C^{2,1}$.
Let $q\in (1,\infty)$, then we have
\begin{equation}
\overline{\mathbb C_+}\setminus\{0\}\subset \rho(-L) 
\label{resol-set-0}
\end{equation}
where $\overline{\mathbb C_+}:=\{\lambda\in\mathbb C;\; \mbox{\rm Re $\lambda$}\geq 0\}$.

For $x\in\partial\Omega$, let $\kappa(x)\leq 0$ be the least eigenvalue of the Weingarten map $-\nabla N$ of $\partial\Omega$
in the direction of $\nu$, where
$\kappa(x)$ is the minimum of two
principal curvatures of the boundary $\partial\Omega$ at $x$ in the direction of $\nu(x)$ if either of them is negative,
while $\kappa(x)=0$ if both of them are nonnegative,
see subsection \ref{wein}. 
Suppose, in addition, that $\eta\in\mathbb R^3\setminus\{0\}$ and $\alpha\in C(\partial\Omega)$ fulfill the relation
\begin{equation}
\alpha(x)+\kappa(x)\geq \frac{\eta\cdot\nu(x)}{2}
\label{ass-fric0}
\end{equation}
for every $x\in\partial\Omega$.
Then
\begin{equation}
\mathbb C\setminus S_\eta\subset \rho(-L)
\label{resol-set-1}
\end{equation}
holds true, where $S_\eta$ is given by \eqref{os-spec-0}.
\label{thm-0}
\end{theorem}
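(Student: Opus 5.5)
The argument has two parts: a Fredholm-type reduction showing that $\lambda+L$ is surjective with closed range whenever it is injective, for $\lambda\in\mathbb C\setminus S_\eta$, and an energy argument giving injectivity under \eqref{ass-fric} or \eqref{ass-fric0}.

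\emph{Fredholm reduction.} Fix $\lambda$ in $\mathbb C\setminus S_\eta$, which contains $\overline{\mathbb C_+}\setminus\{0\}$. I will build a parametrix by the cut-off procedure used in \cite{KS98} for the no-slip case. Near infinity I use the whole-space Oseen resolvent, whose symbol \eqref{formula} is nonsingular exactly off $S_\eta$. Near the obstacle I use the resolvent of the interior problem in $\Omega_R$, with the Navier condition on $\partial\Omega$ and some auxiliary condition on $\partial B_R$; a shift of $\lambda$ to a large parameter in $\Sigma_{\varepsilon,\lambda_0}$ is allowed there, since \eqref{pre-resol} is available. Gluing the two with a partition of unity and the Bogovski\u{\i} operator to restore solenoidality gives
\[
(\lambda+L)\Phi(\lambda)=I+K(\lambda)
\]
on $L^q_\sigma(\Omega)$. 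Here $K(\lambda)$ is compact, because it involves only commutator terms of lower order supported in a bounded annulus and Rellich's theorem applies. By Fredholm theory, $\lambda+L$ is then Fredholm of index zero. It therefore suffices to prove that $\lambda+L$ is injective on $D(L)$ for every $q\in(1,\infty)$. By duality ($L_\eta^*=L_{-\eta}$ with the same Navier condition) this also covers the range.

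\emph{Uniqueness via energy.} Let $u\in D(L_q)$ satisfy $\lambda u+Lu=0$. Local elliptic regularity and the decay of the whole-space Oseen fundamental solution off $S_\eta$ (the solution is exponentially decaying at infinity when $\lambda\notin S_\eta$) show that $u\in H^2$ with enough decay. So I may test with $\bar u$ in $L^2$. Using the Navier condition and the identity $-\int\Delta u\cdot\bar u=2\|\mathbb Du\|_2^2-\int_{\partial\Omega}(2\mathbb Du\,\nu)\cdot\bar u$, together with $\operatorname{div}u=0$, I get
\[
\lambda\|u\|_2^2+2\|\mathbb Du\|_2^2+\int_{\partial\Omega}\alpha|u_\tau|^2\,d\sigma-\int_\Omega(\eta\cdot\nabla u)\cdot\bar u\,dx=0.
\]
The last term has imaginary part proportional to $\operatorname{Im}\int(\eta\cdot\nabla u)\cdot\bar u$, and its real part equals $\frac12\int_{\partial\Omega}(\eta\cdot\nu)|u|^2$, with $|u|=|u_\tau|$ there. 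Taking real parts gives
\[
\operatorname{Re}\lambda\,\|u\|_2^2+2\|\mathbb Du\|_2^2+\int_{\partial\Omega}\Big(\alpha-\frac{\eta\cdot\nu}{2}\Big)|u_\tau|^2\,d\sigma=0 .
\]
For $\operatorname{Re}\lambda>0$ under \eqref{ass-fric}, this yields $u=0$ at once. For $\operatorname{Re}\lambda=0$, $\lambda\ne0$, it gives $\mathbb Du=0$, so $u$ is a rigid motion $a+b\times x$. Decay at infinity forces $a=b=0$, hence $u=0$, and \eqref{resol-set-0} follows.

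\emph{Main obstacle: the region $\operatorname{Re}\lambda<0$ outside $S_\eta$.} Here the sign of $\operatorname{Re}\lambda$ is unfavourable. I will then use the full gradient rather than $\mathbb Du$, through the identity $2\|\mathbb Du\|^2=\|\nabla u\|^2+\int_{\partial\Omega}(\text{Weingarten term})\,u_\tau\cdot\bar u_\tau$. This holds since $u\cdot\nu=0$ and $\operatorname{div}u=0$, and the boundary term is bounded below by $\kappa|u_\tau|^2$; this is where \eqref{ass-fric0} enters, making the total boundary term nonnegative. It remains to show
\[
\operatorname{Re}\lambda\,\|u\|^2+\|\nabla u\|^2>0\quad\text{unless }u=0,
\]
using the imaginary-part identity $\operatorname{Im}\lambda\,\|u\|^2=\operatorname{Im}\int(\eta\cdot\nabla u)\cdot\bar u\le|\eta|\,\|\nabla u\|\,\|u\|$. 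Combining the two relations gives $(\operatorname{Im}\lambda)^2\|u\|^2\le|\eta|^2\|\nabla u\|^2\le-|\eta|^2\operatorname{Re}\lambda\,\|u\|^2$, which contradicts $\lambda\notin S_\eta$ unless $u=0$. This yields \eqref{resol-set-1}. The delicate points will be the Weingarten boundary identity and justifying the $L^2$ integrations for $q\neq2$ through the decay of $u$.
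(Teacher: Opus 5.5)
Your proof is correct, but the existence half takes a different route from the paper's. Your uniqueness half is the paper's argument (Proposition \ref{ext-uni} with Lemma \ref{Du-wein}). It takes real and imaginary parts of the cut-off energy identity, rules out rigid motions on the imaginary axis by summability, and for $\mbox{Re}\,\lambda<0$ combines the Weingarten identity with $(\mbox{Im}\,\lambda)^2\|u\|_2^2\le|\eta|^2\|\nabla u\|_2^2$.

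\textbf{How the existence parts differ.} The paper glues $E_\eta(\lambda)$ to the interior solution operator $M_\eta(\lambda)$ at the \emph{same} $\lambda$ on $\Omega_3$. It puts an artificial friction $\alpha_0>|\eta|/2+1/3$ on $\partial B_3$, chosen, including the curvature of the sphere, so that the interior problem itself satisfies \eqref{ass-fric2} or \eqref{ass-fric3}. The error operator $1+T_\eta(\lambda)$ then acts on compactly supported data $L^q_{[R]}(\Omega)$ and is identity plus compact, so index zero is automatic. Injectivity, however, needs both exterior uniqueness and the interior uniqueness of Proposition \ref{int-uni} in $B_3$. General $f$ is handled afterwards by subtracting a cut-off whole-space solution.

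You instead use the interior resolvent only at a shifted large parameter. So you need no interior uniqueness, no artificial friction and no axisymmetry lemma. The price is that a right parametrix $(\lambda+L)\Phi(\lambda)=I+K(\lambda)$ only shows that $\lambda+L$ has closed range of finite codimension. Index zero must then come from local constancy of the semi-Fredholm index along the connected set $\mathbb C\setminus S_\eta$, starting from large $\lambda>0$, where \eqref{pre-resol} gives invertibility. You should say this explicitly rather than ``by Fredholm theory''.

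\textbf{What each route buys.} Yours shows cleanly that $\lambda+L$ is Fredholm of index zero on all of $\mathbb C\setminus S_\eta$ for any friction, with the friction conditions entering only through injectivity. The paper's route yields the representation \eqref{resol-para}, with operators continuous in $(\lambda,\eta)$ up to $\lambda=0$. At $\lambda=0$, $E_\eta(0)$ no longer maps $L^q$ into $L^q$, so a Fredholm argument for $\lambda+L$ on $L^q_\sigma(\Omega)$ is unavailable there. That representation is exactly what Lemma \ref{lem-unif} and Proposition \ref{ext-key} need for Theorem \ref{thm}.

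\textbf{Two inaccurate side claims (not load-bearing).}
\begin{enumerate}
\item The solution is not exponentially decaying. The projection part $\xi\otimes\xi/|\xi|^2$ of the symbol in \eqref{formula} makes the decay only algebraic. What you actually use is that $(1-\phi)u$ plus a Bogovski\u{\i} correction equals $E_\eta(\lambda)g$ with compactly supported $g$. By \eqref{fmt} and local bootstrapping this gives $u\in H^2(\Omega)$, $\nabla p\in L^2(\Omega)$ and $p-p_\infty\in L^6(\Omega)$, which suffices to pass to the limit in the cut-off energy identity.
\item $L_\eta^*$ is not $L_{-\eta}$ with the same Navier condition. Integrating the drift term by parts leaves $\int_{\partial\Omega}(\eta\cdot\nu)\,u\cdot\overline{v}\,d\sigma$, so the adjoint carries friction $\alpha-\eta\cdot\nu$, which may be negative. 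The quantities $\alpha-\eta\cdot\nu/2$ and $\alpha+\kappa-\eta\cdot\nu/2$ are invariant under $(\alpha,\eta)\mapsto(\alpha-\eta\cdot\nu,-\eta)$, so a duality argument could be repaired. The index argument makes it unnecessary anyway.
\end{enumerate}
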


Theorem \ref{thm-0} will be rephrased in Proposition \ref{prop-resol} and the proof will be given there.

We turn to the $L^q$-$L^r$ estimates of the Oseen semigroup $e^{-tL}$.
Since the $L^q$-$L^r$ smoothing rate near $t=0$ is obvious by using
$\|Le^{-tL}\|_{{\mathcal L}(L^q_\sigma(\Omega))}\leq Ct^{-1}$ for $t\in (0,1)$ together with \eqref{os-elli} and interpolation inequalities,
the issue here is the rate of decay for $t\to\infty$.
This must be closely related to the picture of the resolvent set near $\lambda=0$ discussed in Theorem \ref{thm-0},
nevertheless, the desired situation \eqref{resol-set-1} is not necessary for our aim.
Indeed, with \eqref{resol-set-0} at hand
under the assumption \eqref{ass-fric}, one can proceed to analysis of the large time behavior.
\begin{theorem}
Suppose that a constant vector $\eta\in\mathbb R^3$ and 
a nonnegative function $\alpha\in C^1(\partial\Omega)$ fulfill the relation \eqref{ass-fric}
for every $x\in\partial\Omega$.
Let $q\in (1,\infty)$ and
\begin{equation*}
\left\{
\begin{array}{ll}
q\leq r\leq \infty\qquad &\mbox{for $j=0$}, \\
q\leq r<\infty &\mbox{for $j=1$}.
\end{array}
\right.
\end{equation*}
Then, 
for every $m>0$, there is a constant $C=C(m,\alpha,q,r,\Omega)>0$ such that
\begin{equation}
\|\nabla^je^{-tL}f\|_r\leq Ct^{-\frac{j}{2}-\frac{3}{2}(\frac{1}{q}-\frac{1}{r})}
\left\{
\begin{array}{ll}
\|f\|_q &\mbox{for $j=0$}, \\
(1+t)^{\max\{\frac{1}{2}-\frac{3}{2r},\,0\}}\|f\|_q \qquad&\mbox{for $j=1$},
\end{array}
\right.
\label{decay}
\end{equation}
for all $t>0$, $\eta\in\mathbb R^3$ with $|\eta|\leq m$, and $f\in L^q_\sigma(\Omega)$.
\label{thm}
\end{theorem}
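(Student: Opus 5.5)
The proof will follow the Iwashita / Kobayashi--Shibata strategy. We first establish a local energy decay estimate and then pass to global $L^q$-$L^r$ estimates by a cut-off procedure.

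\textbf{Step 1: resolvent near the imaginary axis.} By Theorem \ref{thm-0} under \eqref{ass-fric} we have $\overline{\mathbb C_+}\setminus\{0\}\subset\rho(-L)$. Combined with \eqref{pre-resol}, the resolvent is analytic in a neighbourhood of $\{\mathrm{Re}\,\lambda\ge 0,\ \lambda\ne 0\}$. The key input is the behaviour near $\lambda=0$. To obtain it, we construct a parametrix. We glue the whole-space Oseen resolvent (explicit symbol, see \eqref{formula}) to the resolvent of an interior problem in $\Omega_R$ carrying the Navier condition on $\partial\Omega$ and a Dirichlet condition on $\partial B_R$. The divergence correction is handled by the Bogovskii operator. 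The remainder is compact, and invertibility of $I+$remainder at every $\lambda\in\overline{\mathbb C_+}$, including $\lambda=0$, follows from the Fredholm alternative together with uniqueness.

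Uniqueness is where \eqref{ass-fric} enters. Testing \eqref{Os-re} with $\bar u$ and integrating by parts gives
\[
\mathrm{Re}\,\lambda\|u\|_2^2+2\|\mathbb Du\|_2^2+\int_{\partial\Omega}\Big(\alpha-\frac{\eta\cdot\nu}{2}\Big)|u_\tau|^2\,d\sigma=0 .
\]
Here the term $\mathrm{Re}\int\eta\cdot\nabla u\cdot\bar u$ produces the boundary term $\frac12\int_{\partial\Omega}(\eta\cdot\nu)|u|^2$, and $u=u_\tau$ on $\partial\Omega$. This forces $\mathbb Du=0$, so $u$ is a rigid motion, and decay at infinity gives $u=0$. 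For purely imaginary $\lambda$ or $\lambda=0$, solutions are only in $L^q$ with $q$ possibly large. One first upgrades them to the class where the energy identity is justified by using Oseen-type decay at infinity in the whole space.

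\textbf{Step 2: expansion at $\lambda=0$ and local energy decay.} Let $R(\lambda)=(\lambda+L)^{-1}$ be restricted to data supported in $B_R$ and measured in $W^{1,q}(\Omega_R)$. From the whole-space symbol, the Oseen fundamental solution with $\eta\ne0$ has the form $R(\lambda)=R_0+O(|\lambda|^{1/2})$-type terms. More precisely, $\partial_\lambda R(\lambda)$ has a singularity of order $|\lambda|^{-1/2}$, uniformly in $|\eta|\le m$. The case $\eta=0$ gives the Stokes-type expansion with $\lambda^{1/2}$ and $\lambda\log\lambda$ terms. Representing the semigroup by the Dunford integral, deforming the contour to the imaginary axis, and using this Hölder-type regularity via the standard Fourier-transform lemma, we obtain
\[
\|e^{-tL}f\|_{W^{1,q}(\Omega_R)}\le C t^{-3/2}\|f\|_q,\qquad t\ge1,
\]
for $f$ supported in $B_R$. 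The constant is uniform in $|\eta|\le m$. Securing this uniformity as $\eta\to0$, where the character of the singularity changes, is the main obstacle. We would handle it by estimating the whole-space kernels uniformly in $\eta$, as Kobayashi--Shibata do.

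\textbf{Step 3: cut-off to the whole space.} Write $e^{-tL}f=(1-\varphi)U(t)\tilde f+\mathbb B[\nabla\varphi\cdot U(t)\tilde f]+$ a Duhamel correction. Here $U(t)$ is the whole-space Oseen semigroup, whose $L^q$-$L^r$ estimates are those of the heat semigroup. The Duhamel correction involves $e^{-(t-s)L}$ applied to compactly supported forcing terms, and Step 2 controls it. This gives \eqref{decay} first for $r$ local and then globally. For $j=0$ and $r=\infty$, and for the gradient estimate, we apply the usual duality and iteration. The growth factor $(1+t)^{\max\{1/2-3/(2r),0\}}$ reflects the $t^{-3/2}$ bound of the local decay when $r>3$.
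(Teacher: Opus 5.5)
Your outline is correct and follows essentially the paper's route: a parametrix gluing the whole-space Oseen resolvent to an interior resolvent with a Bogovskii correction, and Fredholm invertibility of $1+T_\eta(\lambda)$ on all of $\overline{\mathbb C_+}$ (including $\lambda=0$) via the energy identity under $\alpha\geq\eta\cdot\nu/2$ after upgrading summability at infinity. It then feeds the Kobayashi--Shibata integrability and $L^1$-H\"older-$\frac12$ bounds for $\partial_\tau(i\tau+L)^{-1}$ on the imaginary axis into the Fourier lemma to get $t^{-3/2}$ local energy decay, and closes with the standard cut-off procedure that the paper leaves to the literature.

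Your Dirichlet condition on $\partial B_R$ for the interior problem is a harmless, even slightly simpler, substitute for the paper's friction $\alpha_0>|\eta|/2+1/3$ on $\partial B_3$. Uniformity in $|\eta|\leq m$ also needs a bound on $(1+T_\eta(\lambda))^{-1}$ uniform over $(\lambda,\eta)\in K\times\overline{B_m}$, which the paper gets from joint continuity in $(\lambda,\eta)$ plus compactness, and the 3D Stokes resolvent expansion has no $\lambda\log\lambda$ terms.
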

\begin{remark}
Theorem \ref{thm} recovers the result of \cite{SY08} for the Stokes semigroup and the constant $C$ in \eqref{decay} can be
taken uniformly when $\eta\to 0$. 
On the other hand, the dependence of this constant $C$ on the friction $\alpha$ is not clear 
because of lack of 
information about 
such dependence in the a priori estimate 
for the Stokes system in bounded domains subject to the Navier 
boundary condition in spite of efforts by \cite{A21}, see Remark \ref{rem-fric}.
One may expect the uniformity for large $\alpha$ since the same result as in \eqref{decay}
holds true for the no-slip case \cite{KS98}.
\label{rem-main1}
\end{remark}
\begin{remark}
For the Stokes case $\eta=0$ subject to the no-slip condition, it is known from \cite{H11,MS98} that 
the rate of decay \eqref{decay} for $\nabla e^{-tA}$ is best possible.
This should be also the case under the Navier boundary condition as long as there exists a steady Stokes flow
with the forcing term $\mbox{\rm div $F$}$
subject to this boundary condition for which the total net force 
$\int_{\partial\Omega}\big(\mathbb T(u,p)+F\big)\nu\,d\sigma$ 
does not vanish.
This is because the coefficient of the leading term of the asymptotic representation at infinity of the Stokes flow is given by the net force
regardless of the boundary condition
\cite{H-b},
and because the optimal spatial decay of the steady Stokes flow
is closely related to the optimal temporal decay of $\nabla e^{-tA}$.
From this point of view, as pointed out by \cite[Section 5]{H11}, one can not claim the optimality of \eqref{decay} for the Oseen case
$\eta\in\mathbb R^3\setminus\{0\}$ since the steady Oseen flow possesses better spatial decay with wake structure.
\label{rem-main2}
\end{remark}

\section{Preparatory results}
\label{pre}

\subsection{Weingarten map}
\label{wein}

Let ${\mathcal S}$ be an orientable submanifold of class $C^2$ of codimension one in $\mathbb R^3$ with unit normal
$\nu=\nu(x)$, $x\in {\mathcal S}$.
In this subsection, following Duduchava, D. Mitrea and M. Mitrea \cite[Section 3]{RD06},
we briefly introduce an extended unit field 
and the Weingarten map of ${\mathcal S}$.

A vector field $N\in C^1({\mathcal U};\mathbb R^3)$ with ${\mathcal U}$ being a neighborhood of ${\mathcal S}$ is called an extended unit field for
${\mathcal S}$ if $N$ satisfies 
\begin{equation}
N|_{\mathcal S}=\nu, \qquad |N|=1, \qquad
(\nabla N)N=0 
\;\;\mbox{on ${\mathcal S}$}. 
\label{geo-ext1}
\end{equation}
According to the proof of \cite[Proposition 3.1]{RD06}, there exists actually an extended unit field for ${\mathcal S}$
although it is not unique.
Let us fix an extended unit field $N$.
Then, in addition to \eqref{geo-ext1}, the following properties hold true, see \cite[Proposition 3.4]{RD06},
where $-\nabla N$ is called the Weingarten map of ${\mathcal S}$ in the direction of $\nu$:
\medskip

(i)
$(\nabla N)^\top N=0$ in ${\mathcal U}$.
Thus, $(\nabla N)u|_{\mathcal S}$ is tangential to ${\mathcal S}$ for
any vector field $u: {\mathcal S}\to \mathbb R^3$.

(ii)
$(\nabla N)|_{\mathcal S}$ is independent of the choice of 
$N$ and depends only on ${\mathcal S}$.

(iii)
$(\nabla N)^\top=\nabla N$ on ${\mathcal S}$.

(iv)
The eigenvalues of $-\nabla N$ at 
$x\in {\mathcal S}$
consist of, besides zero,
two principal curvatures of ${\mathcal S}$ in the direction of $\nu(x)$.
\medskip

For later use, we show the following lemma.
\begin{lemma}
Let ${\mathcal S}\in C^2$ be as above with unit normal $\nu$ and
$N\in C^1({\mathcal U};\mathbb R^3)$ an extended unit 
field for ${\mathcal S}$, 
where ${\mathcal U}$ is a neighborhood of ${\mathcal S}$.
Suppose that a vector field
$u\in C^1({\mathcal U};\mathbb R^3)$ is tangential to ${\mathcal S}$, that is, $\nu\cdot u|_{\mathcal S}=0$.
Then we have
\begin{equation}
[(\nabla u)^\top\nu]_\tau=-[(\nabla N)u]_\tau=-(\nabla N)u 
\qquad\mbox{on ${\mathcal S}$}.
\label{wein-eq}
\end{equation}

Let $\Omega$ be an exterior domain in $\mathbb R^3$ with $C^2$-boundary ${\mathcal S}=\partial\Omega$ and
$\nu$ the outward unit normal to $\partial\Omega$.
Fix an extended unit field $N\in C^1({\mathcal U};\mathbb R^3)$  
as above, where the neighborhood ${\mathcal U}$ of $\partial\Omega$ may be assumed to be bounded.
Then, for every vector field $u\in H^2_{\rm loc}(\overline{\Omega})$ with $\nu\cdot u|_{\partial\Omega}=0$, we have
\eqref{wein-eq} in $L^2(\partial\Omega)$, where $\nabla u$ as well as $u$ is understood in the sense of trace.
\label{lem-wein}
\end{lemma}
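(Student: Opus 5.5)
The plan is to differentiate the identity $N\cdot u=0$ along tangential directions of ${\mathcal S}$. Set $\phi:=N\cdot u\in C^1({\mathcal U})$. Since $N|_{\mathcal S}=\nu$ and $\nu\cdot u|_{\mathcal S}=0$, the function $\phi$ vanishes on ${\mathcal S}$. Hence its gradient at ${\mathcal S}$ is normal, $\nabla\phi=(\nu\cdot\nabla\phi)\nu$, and so $[\nabla\phi]_\tau=0$ on ${\mathcal S}$.

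Next we compute $\nabla\phi$ componentwise:
\[
\partial_j(N_iu_i)=u_i\partial_jN_i+N_i\partial_ju_i .
\]
With the convention $(\nabla u)_{ij}=\partial_ju_i$, this reads
\[
\nabla\phi=(\nabla N)^\top u+(\nabla u)^\top N .
\]
On ${\mathcal S}$ we have $N=\nu$, and property (iii) gives $(\nabla N)^\top=\nabla N$. Therefore
\[
(\nabla u)^\top\nu=\nabla\phi-(\nabla N)u \qquad\mbox{on ${\mathcal S}$}.
\]
Taking tangential components and using $[\nabla\phi]_\tau=0$ yields the first equality in \eqref{wein-eq}. For the second, property (i) shows that $(\nabla N)u|_{\mathcal S}$ is already tangential, so $[(\nabla N)u]_\tau=(\nabla N)u$. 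One point must be consistent with the paper: whether $\nabla u$ means $(\partial_ju_i)$ or its transpose. The quantity $(\nabla u)^\top\nu$ should be $\nabla(u\cdot\nu)$-like, namely $\sum_i\nu_i\nabla u_i$. If the opposite convention is used, the roles of $\nabla N$ and $(\nabla N)^\top$ swap, and the symmetry (iii) together with (i) (through $(\nabla N)^\top N=0$) makes both versions agree on ${\mathcal S}$.

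For the second part, with $u\in H^2_{\rm loc}(\overline\Omega)$, I would argue by density. Since ${\mathcal U}$ is bounded, choose a bounded smooth-enough subdomain $G\subset\Omega$ with $\partial\Omega\subset\partial G$ and $G\supset{\mathcal U}\cap\Omega$. Approximate $u$ in $H^2(G)$ by $u_k\in C^\infty(\overline G)$. These $u_k$ need not be tangential, so replace them by
\[
v_k:=u_k-(N\cdot u_k)N ,
\]
extended suitably by a cutoff. Then $v_k\in C^1$ near ${\mathcal S}$ and $\nu\cdot v_k=0$ on ${\mathcal S}$. Also $v_k\to u-(N\cdot u)N$ in $H^1$ near ${\mathcal S}$, and the trace of $N\cdot u$ vanishes.

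The main obstacle is here: $N$ is only $C^1$, so $(N\cdot u_k)N$ is only $C^1$ and $H^1$-close to its limit. That does not give convergence of traces of $\nabla v_k$. The cleaner route is to avoid this correction altogether. The identity
\[
(\nabla u)^\top\nu=\nabla\phi-(\nabla N)u
\]
holds a.e. in $G$ for $u\in H^2$, since products of $C^1$ and $H^1$ functions behave well. Here $\phi=N\cdot u\in H^1$ near ${\mathcal S}$, with $\nabla\phi=(\nabla N)^\top u+(\nabla u)^\top N\in H^1$ because $\nabla u\in H^1$ and $N\in C^1$. So we take traces in $L^2(\partial\Omega)$ of this $H^1$ identity. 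It then remains to show that the tangential part of the trace of $\nabla\phi$ vanishes when $\phi\in H^2$-like with zero trace. This follows by approximating $\phi$ in $H^1$ together with $\nabla\phi$ in $H^1$, i.e. treating $\phi$ as a $W^{2,2}$-type function locally. Concretely, I would flatten the boundary with the $C^2$ charts and note that the tangential derivatives of the trace equal the traces of the tangential derivatives. Since the trace of $\phi$ is $0$, its tangential derivatives vanish in $H^{-1/2}$ and hence in $L^2$.
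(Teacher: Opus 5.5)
This is essentially the paper's proof: the tangential gradient of $N\cdot u$ vanishes on ${\mathcal S}$, and the product rule $\nabla(N\cdot u)=(\nabla N)^\top u+(\nabla u)^\top N$ together with (iii) and (i) gives the identity; for $u\in H^2_{\rm loc}(\overline{\Omega})$ you then take traces of this identity near $\partial\Omega$, which the paper simply asserts (``in $H^1(\Omega\cap{\mathcal U})$ and, therefore, in $L^2(\partial\Omega)$'') and which you justify in more detail.

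Two small corrections. First, with $N$ only $C^1$, the term $(\nabla N)^\top u$ is merely continuous up to $\partial\Omega$ (since $H^2\subset C^0$ in three dimensions), not $H^1$, so your claim $\nabla(N\cdot u)\in H^1$ should read ``$H^1$ plus a continuous function''; this is harmless if you approximate $u$, rather than $N\cdot u$, by smooth fields in $H^2$, and the paper's wording has the same imprecision. Second, your aside about the opposite convention is false: under it, $(\nabla u)^\top\nu$ would be the normal derivative $(\nu\cdot\nabla)u$, whose tangential part is not determined by $u|_{\mathcal S}$. Your main line, however, correctly uses the paper's convention $(\nabla u)_{ij}=\partial_j u_i$.
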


\begin{proof}
We have
\begin{equation}
[N\times\nabla(N\cdot u)]\times N  \\
=[N\times \{(\nabla N)^\top u\}]\times N+[N\times\{(\nabla u)^\top N\}]\times N
\label{eq-near}
\end{equation}
in ${\mathcal U}$.
Since $\nu\cdot u|_{\mathcal S}=0$,
we see that the tangential derivative of $N\cdot u$ along ${\mathcal S}$ vanishes, that is,
$N\times \nabla(N\cdot u)=0$ on ${\mathcal S}$.
Hence, \eqref{eq-near} leads to
\begin{equation*}
[(\nabla N)^\top u]_\tau+[(\nabla u)^\top N]_\tau=0
\end{equation*}
on ${\mathcal S}$,
which combined with (iii) above implies the first equality of \eqref{wein-eq}. 
Successively, the second equality follows from (i).

As for the latter part, we observe \eqref{eq-near} in $H^1(\Omega\cap {\mathcal U})$ and, therefore, in
$L^2(\partial\Omega)$ as well.
We thus obtain \eqref{wein-eq} in $L^2(\partial\Omega)$.
The proof is complete.
\end{proof}

\subsection{Oseen resolvent in the whole space}
\label{os-wh}

Let us summarize useful regularity properties near $\lambda=0$ of the Oseen resolvent in the whole space $\mathbb R^3$
due to Kobayashi and Shibata \cite[Section 3]{KS98}.
See this literature for
the proof of several estimates 
in this subsection. 

It is enough to consider
$W^{2,q}$-estimate over a bounded domain, say, $B_3$ 
of the solution to 
the resolvent system
\begin{equation}
\lambda u-\Delta u-\eta\cdot\nabla u+\nabla p=f, \qquad
\mbox{div $u$}=0 \qquad\mbox{in $\mathbb R^3$}
\label{Os-wh}
\end{equation}
for the forcing term $f$ taken from the space
\begin{equation*}
L^q_{[R]}(\mathbb R^3):=\{f\in L^q(\mathbb R^3);\; f(x)=0\;\mbox{a.e. $\mathbb R^3\setminus B_R$}\},
\end{equation*}
where $R\geq 2$.
In terms of the Fourier multiplier operator, the solution is described as
\begin{equation}
u(\cdot,\lambda)=E_\eta(\lambda)f:=
{\mathscr F}^{-1}\left[\frac{|\xi|^2\mathbb I-\xi\otimes \xi}{(\lambda+|\xi|^2-i\eta\cdot\xi)|\xi|^2}\right]{\mathscr F}f
\label{formula}
\end{equation}
\begin{equation}
p=\Pi f:={\mathscr F}^{-1}\left[\frac{-i\xi}{|\xi|^2}\right]{\mathscr F}f
\label{formula-p}
\end{equation}
where $\mathscr F$ and ${\mathscr F}^{-1}$ stand for the Fourier transform and its inversion, respectively.
Note that the formula \eqref{formula} makes sense for both $\lambda=0$ and $\eta=0$ as well.
In view of the symbol of \eqref{formula}, when $\eta\in\mathbb R^3\setminus \{0\}$ (resp. $\eta=0$),
we see that $\lambda$ belongs to $\mathbb C\setminus S_\eta$
(resp. $\mathbb C\setminus (-\infty,0]$) if and only if
\[
\lambda+|\xi|^2-i\eta\cdot\xi\neq 0\qquad\forall\,\xi\in\mathbb R^3
\]
where $S_\eta$ is given by \eqref{os-spec-0}.
This implies that the spectrum for the whole space problem is contained in $S_\eta$ when $\eta\in\mathbb R^3\setminus\{0\}$.
By the Fourier multiplier theorem we find (\cite[Lemma 3.1]{KS98})
\begin{equation}
E_\eta(\lambda)\in {\mathcal L}(L^q(\mathbb R^3), W^{2,q}(\mathbb R^3))
\quad
\left\{
\begin{array}{ll}
\forall\,\lambda\in\mathbb C\setminus S_\eta &\mbox{for $\eta\in\mathbb R^3\setminus\{0\}$},  \\
\forall\,\lambda\in\mathbb C\setminus (-\infty,0]\quad &\mbox{for $\eta=0$},
\end{array}
\right. 
\label{fmt}
\end{equation}
\begin{equation}
\nabla\Pi\in {\mathcal L}(L^q(\mathbb R^3)).
\label{fmt-p}
\end{equation}
The following boundedness covers the case $\lambda=0$ as well, for which see Galdi \cite[IV.2, VII.4]{G-b}:
\begin{equation}
\begin{split}
\nabla^2E_\eta(\lambda)f\in {\mathcal L}(L^q(\mathbb R^3)), 
&\qquad
E_\eta(\lambda)\in {\mathcal L}(L^q(\mathbb R^3), W^{1,q}(B_\rho))   \\
&
\left\{
\begin{array}{ll}
\forall\,\lambda\in(\mathbb C\setminus S_\eta)\cup\{0\} &\mbox{for $\eta\in\mathbb R^3\setminus\{0\}$},  \\
\forall\,\lambda\in\mathbb C\setminus (-\infty,0)\quad &\mbox{for $\eta=0$},
\end{array}
\right.
\end{split}
\label{bdd-wh}
\end{equation}
for every $\rho>0$.

For the objective of this paper, estimates on the imaginary axis are particularly important. 
Let $1<q<\infty$ and set ${\mathcal E}_\eta(\tau):=\partial_\tau E_\eta(i\tau)$ for $\tau\in \mathbb R\setminus\{0\}$.
All the
estimates below
are deduced from the representation \eqref{formula}:
For every $m>0$, there is a constant $C=C(m,q,R)>0$ such that
\begin{equation}
\sup_{\lambda\in \overline{\mathbb C_+}}\|E_\eta(\lambda)\|_{{\mathcal L}(L^q_{[R]}(\mathbb R^3), W^{2,q}(B_3))}\leq C
\label{wh-1}
\end{equation}
\begin{equation}
\int_{-4}^4\|{\mathcal E}_\eta(\tau)\|_{{\mathcal L}(L^q_{[R]}(\mathbb R^3),W^{2,q}(B_3))}\,d\tau\leq C
\label{wh-2}
\end{equation}
\begin{equation}
\sup_{\tau\in\mathbb R}\|E_\eta(i(\tau+h))-E_\eta(i\tau)\|_{{\mathcal L}(L^q_{[R]}(\mathbb R^3),W^{2,q}(B_3))}\leq C|h|^{1/2}
\label{wh-3}
\end{equation}
\begin{equation}
\int_{-2}^2\|{\mathcal E}_\eta(\tau+h)-{\mathcal E}_\eta(\tau)\|_{{\mathcal L}(L^q_{[R]}(\mathbb R^3),W^{2,q}(B_3))}\,d\tau\leq C|h|^{1/2}
\label{wh-4}
\end{equation}
for all $\eta\in\mathbb R^3$ with $|\eta|\leq m$ and $h\in\mathbb R$ with $|h|\leq 1$
(\cite[Lemma 3.6]{KS98}).

As for the continuity with respect to $(\lambda,\eta)$,
we have the following:
For every $m>0$,
compact set $K\subset \overline{\mathbb C_+}$ 
and $\theta\in (0,\frac{1}{2})$, there is a constant $C=C(m,K,\theta,q,R)>0$ such that
\begin{equation}
\|E_\eta(\lambda)-E_{\eta^\prime}(\lambda^\prime)\|_{{\mathcal L}(L^q_{[R]}(\mathbb R^3),W^{2,q}(B_3))}
\leq C\big(|\lambda-\lambda^\prime|+|\eta-\eta^\prime|\big)^\theta
\label{wh-conti}
\end{equation}
for all $(\lambda,\eta),\, (\lambda^\prime,\eta^\prime)\in K\times \overline{B_m}$ (\cite[Lemmas 3.3, 3.4]{KS98}).

\section{Interior problem}
\label{int}

Let $D$ be a bounded domain in $\mathbb R^3$ with $C^{2,1}$-boundary $\partial D$.
This section studies the interior problem for the Oseen resolvent system subject to the Navier 
boundary condition
\begin{equation}
\left\{
\begin{array}{ll}
\lambda u-\Delta u-\eta\cdot\nabla u+\nabla p=f, \quad \mbox{div $u$}=0 \;\;&\mbox{in $D$},   \\
\nu\cdot u=0, \qquad 
[2\mathbb D(u)\nu]_\tau+\alpha u_\tau=0 &\mbox{on $\partial D$},
\end{array}
\right.
\label{Os-int}
\end{equation}

We fix a subdomain $D_0\subset D$ with $|D_0|>0$ and single out a solution to \eqref{Os-int} in such a way that
\begin{equation}
\int_{D_0}p(x)\,dx=0.
\label{ave-zero}
\end{equation}
We begin with uniqueness of solutions to \eqref{Os-int}, see 
Proposition \ref{int-uni} below, where we take 
into account the following fact
found in, for instance, 
\cite{A14,KLT25,W03}; 
especially, the proof is given by \cite{KLT25} in detail.
\begin{lemma}
[{\cite[Proposition 7.2]{KLT25}}]
Let $D$ be a bounded domain as above.
Then the space 
\[
\{u\in H^1(D);\; \mathbb Du=O,\,
\nu\cdot u|_{\partial D}=0\}
\]
is nontrivial if and only if $D$ is
axisymmetric about an axis 
$s\mapsto a+sb,\, s\in\mathbb R$,
with 
some $b\in \mathbb R^3\setminus\{0\}$ and $a\in\mathbb R^3$.
\label{auxi}
\end{lemma}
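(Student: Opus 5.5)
Both directions reduce to rigid motions. A field $u\in H^1(D)$ with $\mathbb Du=O$ in the connected domain $D$ is a rigid motion. The standard route: all second derivatives $\partial_j\partial_k u_i$ are combinations of derivatives of $\mathbb D u$, so they vanish in the distributional sense, and $u$ is affine with skew-symmetric gradient. Hence $u(x)=c+\omega\times x$ for some $c,\omega\in\mathbb R^3$.

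\emph{Sufficiency.} Suppose $D$ is axisymmetric about the line $a+sb$. Take $u(x)=b\times(x-a)$, which is nontrivial since $b\neq 0$. Then $\mathbb Du=O$ because $\nabla u$ is skew. Since $D$ is invariant under rotations about this axis, the flow of $u$ maps $\partial D$ into itself, so $u$ is tangent to $\partial D$; that is, $\nu\cdot u=0$ on $\partial D$ (using $\partial D\in C^{2,1}$ to define $\nu$ pointwise).

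\emph{Necessity.} Suppose $u=c+\omega\times x$ is nontrivial with $\nu\cdot u=0$ on $\partial D$. I would split into two cases.
\begin{itemize}
\item If $\omega=0$, then $u=c\neq 0$ is constant and tangent to $\partial D$. The divergence theorem then gives, for the scalar $\varphi=c\cdot x$,
\[
0=\int_{\partial D}(c\cdot x)\,(c\cdot\nu)\,d\sigma=\int_D|c|^2\,dx>0,
\]
a contradiction. So this case is impossible.
\item If $\omega\neq 0$, decompose $c=c_\parallel+c_\perp$ with $c_\parallel\parallel\omega$. Choose $a$ with $\omega\times(-a)=c_\perp$, which is solvable since $c_\perp\perp\omega$. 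Then $u=c_\parallel+\omega\times(x-a)$, a screw motion.
\end{itemize}

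In the case $\omega\neq 0$, I first show $c_\parallel=0$. The plan is to use the divergence theorem with the test function $\varphi=\omega\cdot x$. Since $\operatorname{div}u=0$,
\[
\int_{\partial D}\varphi\,(u\cdot\nu)\,d\sigma=\int_D u\cdot\nabla\varphi\,dx=\int_D u\cdot\omega\,dx=|D|\,(c_\parallel\cdot\omega).
\]
The left side vanishes by tangency, so $c_\parallel=0$. Hence $u=\omega\times(x-a)$, the infinitesimal generator of rotations about the axis through $a$ with direction $b=\omega$.

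\emph{Main obstacle.} The remaining step is to show that tangency of the rotation field makes $D$ invariant under the rotation group, which is the main technical point. The flow $R_t$ of $u$ is the rotation by angle $t|\omega|$, an isometry of $\mathbb R^3$. Since $u$ is tangent to the compact $C^1$ surface $\partial D$, I would invoke ODE uniqueness for the flow restricted to $\partial D$, viewed as a vector field on the manifold $\partial D$, so that $R_t(\partial D)=\partial D$ for all $t$. Then $R_t(D)$ is a connected open set with boundary $\partial D$ that depends continuously on $t$ and equals $D$ at $t=0$. I conclude $R_t(D)=D$ by a connectedness argument: the set of $t$ with $R_t(D)=D$ is open and closed, because the bounded components of $\mathbb R^3\setminus\partial D$ are finitely many and are permuted continuously. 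Thus $D$ is axisymmetric about the axis $s\mapsto a+s\omega$, which completes the equivalence.
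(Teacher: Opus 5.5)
Your proof is correct. The paper gives no argument of its own for this lemma: it quotes it from \cite{KLT25}, and uses only the ``only if'' direction, in Proposition~\ref{int-uni}, to exclude nontrivial rigid motions when $\alpha\equiv 0$. So there is no in-paper proof to compare with. What you supply is a natural self-contained proof: reduce to $u=c+\omega\times x$; apply the identity $\int_{\partial D}(e\cdot x)(u\cdot\nu)\,d\sigma=\int_D e\cdot u\,dx$ with $e=c$ (if $\omega=0$) or $e=\omega$ to rule out translations and the screw component; then show $D$ is invariant under the rotation flow $R_t$.

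The only loosely phrased step is the last one (``permuted continuously''), and it needs no finiteness of components. Since $R_t$ is a homeomorphism of $\mathbb R^3$ with $R_t(\partial D)=\partial D$, it maps components of $\mathbb R^3\setminus\partial D$ onto components, and $D$ is one of them. For fixed $x_0\in D$, the path $t\mapsto R_tx_0$ never meets $\partial D$, so it stays in $D$. Hence $R_t(D)\cap D\neq\emptyset$, and therefore $R_t(D)=D$ for all $t$.

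Similarly, for $R_t(\partial D)=\partial D$ you really use two facts: integral curves of the tangential field $u|_{\partial D}$ exist on the compact surface $\partial D$, and the linear ODE $\dot x=\omega\times(x-a)$ has unique solutions in $\mathbb R^3$.
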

\begin{proposition}
Suppose that a constant vector $\eta\in\mathbb R^3$ and a nonnegative function $\alpha\in C(\partial D)$ fulfill the relation
\begin{equation}
\alpha(x)\geq\frac{\eta\cdot\nu(x)}{2}
\label{ass-fric2}
\end{equation}
for every $x\in \partial D$, where $\nu(x)$ denotes the outward unit normal to the boundary $\partial D\in C^{2,1}$.
If, in particular, $\alpha\equiv 0$, it is additionally assumed that $D$ is not axisymmetric about any axis. 
Let $q\in (1,\infty)$ and
\begin{equation*}
\begin{array}{ll}
\lambda\in\overline{\mathbb C_+} &\mbox{for $\eta\in\mathbb R^3\setminus \{0\}$},  \\
\lambda\in\mathbb C\setminus (-\infty,0) \qquad&\mbox{for $\eta=0$}.
\end{array}
\end{equation*}
Then the only solution $(u,p)\in W^{2,q}(D)\times W^{1,q}(D)$ to \eqref{Os-int}--\eqref{ave-zero} 
with $f=0$ is $(u,p)=(0,0)$.

For $x\in \partial D$, let
$\kappa(x)\leq 0$ be the least eigenvalue of the Weingarten map  $-\nabla N$
of $\partial D$ in the direction of $\nu$
(see subsection \ref{wein}).
Suppose in addition that $\eta\in\mathbb R^3\setminus\{0\}$ and $\alpha\in C(\partial D)$ fulfill the relation
\begin{equation}
\alpha(x)+\kappa(x)\geq\frac{\eta\cdot\nu(x)}{2}
\label{ass-fric3}
\end{equation}
for every $x\in\partial D$.
Let $\lambda\in (\mathbb C\setminus S_\eta)\cup \{0\}$ with 
$S_\eta$ being given by \eqref{os-spec-0}.
Then the same uniqueness assertion above holds true.
\label{int-uni}
\end{proposition}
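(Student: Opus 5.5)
The plan is an energy argument: test the equation with $\bar u$ and read off the real and imaginary parts. Let $(u,p)\in W^{2,q}(D)\times W^{1,q}(D)$ solve \eqref{Os-int}--\eqref{ave-zero} with $f=0$. By interior and boundary regularity for the Stokes system with Navier condition on bounded domains (bootstrapping, treating $\eta\cdot\nabla u$ and $\lambda u$ as data), we may assume $u\in H^2(D)$, so all pairings below make sense.

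\textbf{Energy identity.} Since $\mbox{div $u$}=0$ we have $\Delta u=2\,\mbox{div}\,\mathbb Du$. Because $\nu\cdot u=0$ on $\partial D$, only the tangential part of $(2\mathbb Du)\nu$ enters the boundary integral, and it equals $-\alpha u_\tau$. The pressure term drops out since $\nu\cdot u=0$ and $\mbox{div $u$}=0$. Finally $\mbox{Re}\int_D(\eta\cdot\nabla u)\cdot\bar u=\frac12\int_{\partial D}(\eta\cdot\nu)|u|^2$. Taking real and imaginary parts then gives
\begin{equation*}
\mbox{Re $\lambda$}\,\|u\|_2^2+2\|\mathbb Du\|_2^2+\int_{\partial D}\Big(\alpha-\frac{\eta\cdot\nu}{2}\Big)|u|^2\,d\sigma=0,\qquad
\mbox{Im $\lambda$}\,\|u\|_2^2=\mbox{Im}\int_D(\eta\cdot\nabla u)\cdot\bar u .
\end{equation*}

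\textbf{First assertion.} Under \eqref{ass-fric2} and $\mbox{Re $\lambda$}\ge0$, every term of the real identity is nonnegative, so each vanishes. In particular $\mathbb Du=O$, hence $u=a+b\times x$ is a rigid motion tangent to $\partial D$. Moreover $(\alpha-\eta\cdot\nu/2)|u|^2=0$ on $\partial D$. Inserting $u$ into the equation gives $\nabla p=b\times\eta-\lambda(a+b\times x)$, because $\eta\cdot\nabla u=b\times\eta$. We then split into cases.
\begin{itemize}
\item \emph{Case $\lambda\neq0$.} Taking the curl gives $2\lambda b=0$, so $b=0$ and $u=a$ is constant. From $\int_D|a|^2\,dx=\int_{\partial D}(a\cdot x)(a\cdot\nu)\,d\sigma=0$ we get $a=0$.
\item \emph{Case $\lambda=0$, $\eta\ne0$.} We have $\int_{\partial D}\eta\cdot\nu\,d\sigma=0$. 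If $\alpha\equiv0$, then \eqref{ass-fric2} forces $\eta\cdot\nu\le0$ on $\partial D$, hence $\eta\cdot\nu\equiv0$, which is impossible for a bounded domain by the argument above applied to $a=\eta$. Thus $\int_{\partial D}(\alpha-\eta\cdot\nu/2)\,d\sigma=\int_{\partial D}\alpha\,d\sigma>0$, so $u$ vanishes on a relatively open piece of $\partial D$.
\item \emph{Case $\lambda=0$, $\eta=0$.} If $\alpha\not\equiv0$, the same conclusion holds directly: $u$ vanishes where $\alpha>0$. If $\alpha\equiv0$, Lemma \ref{auxi} together with non-axisymmetry gives $u=0$.
\end{itemize}
In the two $\lambda=0$ cases where $u$ vanishes on an open boundary piece, we conclude $u\equiv0$: the zero set of a nontrivial rigid motion is empty or a line, so it cannot contain a two-dimensional surface piece. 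In all cases $u=0$, then $\nabla p=0$, and \eqref{ave-zero} gives $p=0$.

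\textbf{Second assertion.} The case $\lambda=0$ is already covered, so take $\lambda\notin S_\eta$, allowing $\mbox{Re $\lambda$}<0$. The key step is to compare $2\|\mathbb Du\|_2^2$ with $\|\nabla u\|_2^2$:
\begin{equation*}
2\|\mathbb Du\|_2^2=\|\nabla u\|_2^2+\mbox{Re}\int_D\partial_ju_i\,\partial_i\bar u_j\,dx .
\end{equation*}
Integrating by parts twice and using $\mbox{div $u$}=0$ turns the cross term into the boundary integral $\mbox{Re}\int_{\partial D}[(\nabla u)^\top\nu]_\tau\cdot\bar u_\tau\,d\sigma$. By Lemma \ref{lem-wein}, this equals $\int_{\partial D}(-\nabla N)u\cdot\bar u\,d\sigma$. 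By (iii)--(iv) of subsection \ref{wein}, and since $u$ is tangential, this is at least $\int_{\partial D}\kappa|u|^2\,d\sigma$. With \eqref{ass-fric3} the real identity becomes $\mbox{Re $\lambda$}\,\|u\|_2^2+\|\nabla u\|_2^2\le0$. The imaginary identity gives $|\mbox{Im $\lambda$}|\,\|u\|_2\le|\eta|\,\|\nabla u\|_2$. If $u\ne0$, set $X=\|\nabla u\|_2/\|u\|_2$; then $X^2\le-\mbox{Re $\lambda$}$ and $(\mbox{Im $\lambda$})^2\le|\eta|^2X^2\le-|\eta|^2\mbox{Re $\lambda$}$, i.e. $\lambda\in S_\eta$, a contradiction. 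Hence $u=0$, and then $p=0$ as before.

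I expect the main obstacle to be the sign and index bookkeeping in this boundary computation, namely making sure the cross term produces exactly $+\int_{\partial D}(-\nabla N)u\cdot\bar u\,d\sigma$ so that the curvature $\kappa$ enters with the sign stated in \eqref{ass-fric3}.
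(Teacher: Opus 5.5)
Your proof follows the paper's own argument: the real and imaginary parts of the tested equation, rigid motions plus Lemma~\ref{auxi} when $\mbox{Re $\lambda$}=0$, and the Weingarten identity of Lemma~\ref{lem-wein} plus Cauchy--Schwarz for the second part. Every step you actually carry out is correct. However, the first assertion has one case you never treat.

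For $\eta=0$ the admissible set is $\mathbb C\setminus(-\infty,0)$, which contains every $\lambda$ with $\mbox{Re $\lambda$}<0$ and $\mbox{Im $\lambda$}\neq 0$. Your first part only handles $\mbox{Re $\lambda$}\geq 0$, and your second part assumes $\eta\neq 0$, so these $\lambda$ are not covered. The fix is one line, and it is exactly what the paper does. With $\eta=0$ your imaginary identity reads $(\mbox{Im $\lambda$})\|u\|_{2,D}^2=0$, so $u=0$ whenever $\lambda\notin\mathbb R$. Then $\nabla p=0$, and $p=0$ by \eqref{ave-zero}.

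Your rigid-motion step takes a slightly different route from the paper. The paper inserts $\mathbb Du=O$ into the boundary condition to get $\alpha u_\tau=0$. This gives $u=0$ on the nonempty open set $\{\alpha>0\}$ whenever $\alpha\not\equiv 0$, and Lemma~\ref{auxi} is invoked only when $\alpha\equiv 0$.

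You instead argue in two ways depending on $\lambda$:
\begin{itemize}
\item For $\lambda\neq 0$ you take the curl of the equation, which kills the rotational part. This incidentally shows that non-axisymmetry is needed only at $\lambda=0$.
\item For $\lambda=0$ you use the vanishing boundary energy term together with $\int_{\partial D}\eta\cdot\nu\,d\sigma=0$. This also correctly shows that \eqref{ass-fric2} with $\eta\neq 0$ rules out $\alpha\equiv 0$.
\end{itemize}
Both routes are valid; the paper's is shorter.
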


\begin{proof}
First of all, we observe $u\in H^2(D)$ and $p\in H^1(D)$ (even though $q$ is close to $1$) by bootstrap argument with the aid of the
regularity theory for the Stokes system subject to the Navier 
boundary condition \cite{A21,A14,SS07}; indeed, less condition on the friction $\alpha(x)$ than ours is imposed in \cite{A21}.
We multiply the equation by $\overline{u}$, integrate and use the boundary
condition to get
\begin{equation}
\lambda \|u\|_{2,D}^2+2\|\mathbb Du\|_{2,D}^2+\int_{\partial D}\alpha|u|^2\,d\sigma
-\int_D(\eta\cdot\nabla u)\cdot\overline{u}\,dx=0.
\label{energy-1}
\end{equation}
In fact, the third term arises from
\[
\int_{\partial D}[\mathbb T(u,p)\nu]\cdot\overline{u}\,d\sigma
=\int_{\partial D}[\mathbb T(u,p)\nu]_\tau\cdot \overline{u}_\tau\,d\sigma
=-\int_{\partial D}\alpha |u_\tau|^2\,d\sigma
=-\int_{\partial D}\alpha |u|^2\,d\sigma
\]
by taking into account that the boundary condition in \eqref{Os-int} comes from \eqref{BC1}.
Since
\[
\int_D [(\eta\cdot\nabla u)\cdot\overline{u}+u\cdot(\eta\cdot\nabla\overline{u})]\,d\sigma
=\int_D\mbox{div $(\eta|u|^2)$}\,dx
=\int_{\partial D}\eta\cdot\nu|u|^2\,d\sigma,
\]
the real and imaginary parts respectively give
\begin{equation}
(\mbox{Re $\lambda$})\|u\|_{2,D}^2+2\|\mathbb Du\|_{2,D}^2+\int_{\partial D}\left(\alpha-\frac{\eta\cdot\nu}{2}\right)|u|^2\,d\sigma=0,
\label{ene-re}
\end{equation}
\begin{equation}
(\mbox{Im $\lambda$})\|u\|_{2,D}^2-\mbox{Im}\int_D(\eta\cdot\nabla u)\cdot\overline{u}\,dx=0.
\label{ene-im}
\end{equation}
We immediately see from \eqref{ass-fric2} that $u=0$ if $\mbox{Re $\lambda$}>0$. 
When $\mbox{Re $\lambda$}=0$, we have 
$\mathbb Du=O$.
Then the rigid motion satisfying $\nu\cdot u=0$ as well as $\alpha \nu\times u=0$ 
at $\partial D$ should be $u=0$ unless $\alpha$ is identically zero.
When $\alpha\equiv 0$, due to Lemma \ref{auxi},
the only case in which a nontrivial rigid motion $u$ with $\nu\cdot u=0$ at $\partial D$
is available is that $D$ is axisymmetric about an axis.
This case is ruled out by the assumption.
In this way, we are led to $u=0$ for $\lambda\in \overline{\mathbb C_+}$, and thereby $\nabla p=0$, yielding $p=0$ by \eqref{ave-zero}.
If, in particular, $\eta=0$, then \eqref{ene-im} implies $u=0$ for $\lambda\in\mathbb C\setminus\mathbb R$ as well.

We next consider the case under further condition \eqref{ass-fric3} when $\eta\in\mathbb R^3\setminus\{0\}$.
Since $\mbox{div $u$}=0$, we have $\Delta u=\mbox{div $(2\mathbb Du)$}$, that together with $\nu\cdot u=0$ at $\partial D$ leads to
\begin{equation}
\begin{split}
2\|\mathbb Du\|_{2,D}^2
&=\|\nabla u\|_{2,D}^2+\int_{\partial D}[(\nabla u)^\top\nu]\cdot\overline{u}\,d\sigma  \\
&=\|\nabla u\|_{2,D}^2-\int_{\partial D}[(\nabla N)u]\cdot\overline{u}\,d\sigma
\end{split}
\label{geo}
\end{equation}
by Lemma \ref{lem-wein}, which implies that
\begin{equation}
2\|\mathbb Du\|_{2,D}^2\geq \|\nabla u\|_{2,D}^2
+\int_{\partial D}\kappa|u|^2\,d\sigma.
\label{geo2}
\end{equation}
Combining \eqref{ene-re} with \eqref{geo2} gives
\begin{equation}
(\mbox{Re $\lambda$})\|u\|_{2,D}^2+\|\nabla u\|_{2,D}^2
+\int_{\partial D}\left(\alpha+\kappa-\frac{\eta\cdot\nu}{2}\right)|u|^2\,d\sigma\leq 0.
\label{ene-re2}
\end{equation}
By \eqref{ass-fric3} we at once find that $u=0$ when $\mbox{Re $\lambda$}\geq 0$.
Consider the case $\mbox{Re $\lambda$}<0$ by using \eqref{ene-im}.
Then we have
\[
(\mbox{Im $\lambda$})^2\|u\|_{2,D}^4\leq |\eta|^2\|\nabla u\|_{2,D}^2\|u\|_{2,D}^2
\leq -|\eta|^2(\mbox{Re $\lambda$})\,\|u\|^4_{2,D},
\]
from which we find that $\lambda\in \mathbb C\setminus S_\eta$ leads to $u=0$.
The proof is complete.
\end{proof}

The following proposition provides a solution operator to \eqref{Os-int}--\eqref{ave-zero} along with regularity properties.
\begin{proposition}
Under the same assumptions of the first half of Proposition \ref{int-uni}, let
\[
\begin{array}{ll}
\lambda \in\overline{\mathbb C_+} &\mbox{for $\eta\in\mathbb R^3\setminus\{0\}$},  \\
\lambda\in\mathbb C\setminus (-\infty,0)\qquad &\mbox{for $\eta=0$}.
\end{array}
\]
Then there exist bounded operators $M_\eta(\lambda)$ and $N_\eta(\lambda)$ from $L^q(D)$ into
$W^{2,q}(D)$ and $W^{1,q}(D)$, respectively,
such that the pair $\big(M_\eta(\lambda)f,N_\eta(\lambda)f\big)$ gives a unique solution
of \eqref{Os-int} subject to \eqref{ave-zero} for all $f\in L^q(D)$, and it is analytic in a certain open neighborhood of
$\overline{\mathbb C_+}$ (resp. $\mathbb C\setminus (-\infty,-\rho]$ for some $\rho>0$) when $\eta\in\mathbb R^3\setminus\{0\}$
(resp. $\eta=0$).

Instead, under the same assumptions of the second half of Proposition \ref{int-uni}, let
$\lambda\in (\mathbb C\setminus S_\eta)\cup\{0\}$.
Then the same conclusion as above holds true, and $\big(M_\eta(\lambda)f,N_\eta(\lambda)f\big)$ is analytic in 
$(\mathbb C\setminus S_\eta)\cup \{|\lambda|<\rho\}$ for some $\rho >0$.

Moreover, given $m>0$ and compact set $K$ satisfying
\begin{equation}
\begin{array}{ll}
K\subset \overline{\mathbb C_+}&\mbox{under \eqref{ass-fric2}}, \\
K\subset\{\lambda\in\mathbb C;\; m^2\mbox{\rm Re $\lambda$}+(\mbox{\rm Im $\lambda$})^2>0\}\cup\{0\}\;\;&
\mbox{under \eqref{ass-fric3}},
\end{array}
\label{cpt-K}
\end{equation}
the solution enjoys the following properties.

\begin{enumerate}
\item
For every integer $j\geq 0$, there is a constant $C=C(j,m,K,\alpha,q,D)>0$ 
such that
\begin{equation}
\|\partial^j_\lambda M_\eta(\lambda)f\|_{W^{2,q}(D)}+\|\partial^j_\lambda N_\eta(\lambda)f\|_{W^{1,q}(D)}\leq C\|f\|_{q,D}
\label{high-int}
\end{equation}
for all $(\lambda,\eta)\in K\times \overline{B_m}$ and $f\in L^q(D)$.

\item
There is a constant $C=C(m,K,\alpha,q,D)>0$ such that
\begin{equation}
\|M_\eta(\lambda)f-M_{\eta^\prime}(\lambda^\prime)f\|_{W^{2,q}(D)}
+\|N_\eta(\lambda)f-N_{\eta^\prime}(\lambda^\prime)f\|_{W^{1,q}(D)}
\leq C\big(|\lambda-\lambda^\prime|+|\eta-\eta^\prime|\big)\|f\|_q
\label{conti-int}
\end{equation}
for all $(\lambda,\eta),\, (\lambda^\prime,\eta^\prime)\in K\times \overline{B_m}$ and $f\in L^q(D)$.
\end{enumerate}
\label{prop-int}
\end{proposition}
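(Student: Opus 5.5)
The plan is to use Fredholm theory for a compact perturbation of a Stokes resolvent on $D$, with uniqueness supplied by Proposition \ref{int-uni}. Fix a large $\lambda_1>0$. By the bounded-domain analogue of \cite{SS07} (generalized resolvent estimates for the Navier condition, with $\alpha\in C(\partial D)$ handled as a lower-order boundary perturbation, cf. \cite{A21,A14}), the Stokes problem $\lambda_1 u-\Delta u+\nabla p=g$, $\mbox{div $u$}=0$, subject to the Navier condition and \eqref{ave-zero}, has a unique solution. We write it as $(u,p)=(S g,\Pi_D g)$ with $S\in{\mathcal L}(L^q(D),W^{2,q}(D))$ and $\Pi_D\in{\mathcal L}(L^q(D),W^{1,q}(D))$. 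We then look for a solution of \eqref{Os-int} in the form $u=Sg$, $p=\Pi_D g$. This gives the equation
\[
g+T_\eta(\lambda)g=f,\qquad T_\eta(\lambda)g:=(\lambda-\lambda_1)Sg-\eta\cdot\nabla Sg .
\]
Since $D$ is bounded and $W^{2,q}(D)\hookrightarrow W^{1,q}(D)$ is compact (Rellich), $T_\eta(\lambda)$ is compact on $L^q(D)$. It is also a polynomial of degree one in $(\lambda,\eta)$, so it is entire in $\lambda$ and Lipschitz in $(\lambda,\eta)$ in operator norm.

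Injectivity of $I+T_\eta(\lambda)$ comes from uniqueness. If $g+T_\eta(\lambda)g=0$, then $(Sg,\Pi_D g)$ solves \eqref{Os-int}--\eqref{ave-zero} with $f=0$. By Proposition \ref{int-uni}, whichever half applies, $Sg=0$ and $\Pi_D g=0$. The equation then gives $g=\lambda_1 Sg-\Delta Sg+\nabla\Pi_D g=0$.

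Surjectivity follows from the Fredholm alternative, so $I+T_\eta(\lambda)$ is an isomorphism. We then set
\[
M_\eta(\lambda)=S(I+T_\eta(\lambda))^{-1},\qquad N_\eta(\lambda)=\Pi_D(I+T_\eta(\lambda))^{-1}.
\]
Uniqueness of the resulting solution again follows from Proposition \ref{int-uni}.

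The set of $\lambda$ where $I+T_\eta(\lambda)$ is invertible is open, and the inverse is analytic there; by the analytic Fredholm theorem it is meromorphic in $\mathbb C$. This gives analyticity on an open neighbourhood of $\overline{\mathbb C_+}$. In the case $\eta=0$ we also use that the non-real points and $0$ are regular, so invertibility holds on $\mathbb C\setminus(-\infty,0)$. Since poles are isolated, and $0$ is regular, invertibility persists on $\mathbb C\setminus(-\infty,-\rho]$ for some small $\rho>0$. Under \eqref{ass-fric3} we obtain in the same way analyticity on $(\mathbb C\setminus S_\eta)\cup\{|\lambda|<\rho\}$.

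For the estimates, the map $(\lambda,\eta)\mapsto (I+T_\eta(\lambda))^{-1}$ is continuous on $K\times\overline{B_m}$. The reason is that the pointwise uniqueness above holds for every pair in this set. Under \eqref{cpt-K} and \eqref{ass-fric3}, for $|\eta|\le m$ we have $|\eta|^2\mbox{Re $\lambda$}+(\mbox{Im $\lambda$})^2>0$ whenever $m^2\mbox{Re $\lambda$}+(\mbox{Im $\lambda$})^2>0$, at least when $\mbox{Re $\lambda$}<0$. (For $\eta=0$ in the second half, $\lambda\neq 0$ in $K$ gives $\lambda\notin(-\infty,0]$.) Compactness of $K\times\overline{B_m}$ then yields a uniform bound on the inverse. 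Derivatives in $\lambda$ are handled by
\[
\partial_\lambda (I+T)^{-1}=-(I+T)^{-1}S(I+T)^{-1},
\]
iterated, which gives \eqref{high-int}. The resolvent identity
\[
(I+T)^{-1}-(I+T')^{-1}=(I+T)^{-1}(T'-T)(I+T')^{-1},
\]
together with $\|T-T'\|\le C(|\lambda-\lambda'|+|\eta-\eta'|)$, gives the Lipschitz bound \eqref{conti-int}.

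The main obstacle is the uniform invertibility on $K\times\overline{B_m}$ rather than merely pointwise invertibility. The point requiring care is that Proposition \ref{int-uni} must apply at every $(\lambda,\eta)$ in the compact set, including $\eta$ near $0$, where the admissible $\lambda$-region changes. We check this using the shape of $K$ in \eqref{cpt-K}. A secondary point is to cite the Stokes solvability for the Navier condition with merely continuous $\alpha$ and $C^{2,1}$ boundary on a bounded domain.
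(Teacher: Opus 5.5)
Your proposal is correct and follows essentially the paper's proof. Both arguments take a compact perturbation of a Stokes solution operator, get injectivity from Proposition \ref{int-uni} and then invertibility from the Fredholm alternative, obtain continuity and uniform bounds of the inverse on the compact set $K\times\overline{B_m}$, and use the identity $\partial_\lambda^j M_\eta(\lambda)=(-1)^j j!\,M_\eta(\lambda)^{j+1}$, which is exactly what your iterated derivative formula amounts to. The differences are cosmetic: the paper perturbs from the steady Stokes operator $(M_0(0),N_0(0))$ at $(\lambda,\eta)=(0,0)$ rather than from the resolvent at a large $\lambda_1>0$, and it gets analyticity directly from the Neumann series rather than from the analytic Fredholm theorem.
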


\begin{proof}
For the Stokes system \eqref{Os-int}--\eqref{ave-zero} with $(\lambda,\eta)=(0,0)$,
it follows from  
\cite{A21,A14,SS07} that  
there is a unique solution
\[
u=M_0(0)f, \qquad p=N_0(0)f
\]
such that
\begin{equation}
\|M_0(0)f\|_{W^{2,q}(D)}+\|N_0(0)f\|_{W^{1,q}(D)}\leq C\|f\|_{q,D}.
\label{st-int}
\end{equation}
We then see that $M_0(0)$ and $\nabla M_0(0)$ are compact operators from $L^q(D)$ into itself by the Rellich theorem.
Let us look for a solution to \eqref{Os-int}--\eqref{ave-zero} of the form $(u,p)=(M_0(0)g,N_0(0)g)$ with a suitable $g\in L^q(D)$:
\[
\lambda u-\Delta u-\eta\cdot\nabla u+\nabla p=g+\lambda M_0(0)g-\eta\cdot\nabla M_0(0)g, \qquad \mbox{div $u$}=0
\]
together with the boundary condition in \eqref{Os-int}.
Then the operator $1+\lambda M_0(0)-\eta\cdot\nabla M_0(0)$
is injective and, thereby,  
invertible in $L^q(D)$ by the Fredholm alternative.
In fact, let $g\in L^q(D)$ satisfy $g+\lambda M_0(0)g-\eta\cdot\nabla M_0(0)g=0$, then
the pair $(u,p)=(M_0(0)g,N_0(0)g)\in W^{2,q}(D)\times W^{1,q}(D)$ must be the trivial one by Proposition \ref{int-uni}, yielding 
$g=-\Delta u+\nabla p=0$. 
We thus find that the pair
\begin{equation}
\begin{split}
&u=M_\eta(\lambda)f:=M_0(0)[1+\lambda M_0(0)-\eta\cdot\nabla M_0(0)]^{-1}f\in W^{2,q}(D) \\
&p=N_\eta(\lambda)f:=N_0(0)[1+\lambda M_0(0)-\eta\cdot\nabla M_0(0)]^{-1}f\in W^{1,q}(D)
\end{split}
\label{sol-int}
\end{equation}
provides a unique solution to \eqref{Os-int} for all 
$\lambda \in\overline{\mathbb C_+}$ (resp. $\lambda\in (\mathbb C\setminus S_\eta)\cup\{0\}$)
$\eta\in\mathbb R^3$ and $f\in L^q(D)$ for the first case (resp. second case).

In what follows, we will describe the proof for the first case under the condition \eqref{ass-fric2}.
We fix $m>0$ arbitrarily.
Since $(\lambda,\eta)\mapsto 1+\lambda M_0(0)-\eta\nabla M_0(0)$ is continuous from 
$\overline{\mathbb C_+}\times \overline{B_m}$ to ${\mathcal L}(L^q(D))$,
so is $(\lambda,\eta)\mapsto [1+\lambda M_0(0)-\eta\cdot\nabla M_0(0)]^{-1}$.
In fact, if $(\lambda,\eta)$ and $(\lambda^\prime,\eta^\prime)$ 
satisfy
\begin{equation*}
\begin{split}
&|\lambda^\prime-\lambda|\|M_0(0)\|_{{\mathcal L}(L^q(D))}+|\eta^\prime-\eta|\|\nabla M_0(0)\|_{{\mathcal L}(L^q(D))}  \\
&\qquad\leq \frac{1}{2\big\|[1+\lambda M_0(0)-\eta\cdot\nabla M_0(0)]^{-1}\big\|_{{\mathcal L}(L^q(D))}}
\end{split}
\end{equation*}
as well as lie in $\overline{\mathbb C_+}\times\overline{B_m}$, then
we are led to the 
Neumann series representation 
\begin{equation}
\begin{split}
&[1+\lambda^\prime M_0(0)-\eta^\prime\cdot\nabla M_0(0)]^{-1}  \\
&=[1+\lambda M_0(0)-\eta\cdot\nabla M_0(0)]^{-1}  \\
&\quad\sum_{k=0}^\infty\Big(-\big\{(\lambda^\prime-\lambda)M_0(0)-(\eta^\prime-\eta)\cdot\nabla M_0(0)\big\}
[1+\lambda M_0(0)-\eta\cdot\nabla M_0(0)]^{-1}\Big)^k
\end{split}
\label{int-neu}
\end{equation}
which implies the continuity of
$(\lambda,\eta)\mapsto [1+\lambda M_0(0)-\eta\cdot\nabla M_0(0)]^{-1}$; to be precise,
\begin{equation}
\begin{split}
&\quad \big\|[1+\lambda^\prime M_0(0)-\eta^\prime\cdot\nabla M_0(0)]^{-1}-[1+\lambda M_0(0)-\eta\cdot\nabla M_0(0)]^{-1}\big\|_{{\mathcal L}(L^q(D))}  \\
&\leq 2\big\|[1+\lambda M_0(0)-\eta\cdot\nabla M_0(0)]^{-1}\big\|^2_{{\mathcal L}(L^q(D))}
\big(|\lambda^\prime-\lambda|\|M_0(0)\|_{{\mathcal L}(L^q(D))}+|\eta^\prime-\eta|\|\nabla M_0(0)\|_{{\mathcal L}(L^q(D))}\big).
\end{split}
\label{conti-inv}
\end{equation}
Hence, we have 
\begin{equation}
\sup_{(\lambda,\eta)\in K\times \overline{B_m}}\big\|[1+\lambda M_0(0)-\eta\cdot\nabla M_0(0)]^{-1}\big\|_{{\mathcal L}(L^q(D))}<\infty
\label{K-bdd}
\end{equation}
for each compact set $K\subset \overline{\mathbb C_+}$, which combined with \eqref{st-int}
gives \eqref{high-int} with $j=0$ in view of \eqref{sol-int}.
From \eqref{conti-inv} and \eqref{K-bdd} we immediately obtain \eqref{conti-int}.

Furthermore, we fix $\eta\in\mathbb R^3$ and set $\eta^\prime=\eta$ in \eqref{int-neu}, that tells us that
$\lambda\mapsto [1+\lambda M_0(0)-\eta\cdot\nabla M_0(0)]^{-1}$ is analytic in a ceratin open neighborhood of
$\overline{\mathbb C_+}$ with values in ${\mathcal L}(L^q(D))$ and, therefore, so is 
$\lambda\mapsto (M_\eta(\lambda),N_\eta(\lambda))$ with values in ${\mathcal L}(L^q(D), W^{2,q}(D)\times W^{1,q}(D))$.

It remains to show \eqref{high-int} for every $j\geq 1$.
By taking the differentiation of \eqref{Os-int}, we see that 
$(\partial_\lambda M_\eta(\lambda)f,\partial_\lambda N_\eta(\lambda)f)\in W^{2,q}(D)\times W^{1,q}(D)$
is a solution to \eqref{Os-int}--\eqref{ave-zero} with $f$ replaced by $-M_\eta(\lambda)f$.
By uniqueness of solutions (Proposition \ref{int-uni}), we infer
\[
\partial_\lambda M_\eta(\lambda)f=-M_\eta(\lambda)^2f, \qquad
\partial_\lambda N_\eta(\lambda)f=-N_\eta(\lambda)M_\eta(\lambda)f.
\]
By induction we find
\[
\partial_\lambda^jM_\eta(\lambda)f=(-1)^j
j!M_\eta(\lambda)^{j+1}f, \qquad
\partial_\lambda^jN_\eta(\lambda)f=(-1)^j
j!N_\eta(\lambda)M_\eta(\lambda)^jf
\]
which leads to \eqref{high-int} for every $j\geq 1$ from \eqref{high-int} with $j=0$.

For the second case under the condition \eqref{ass-fric3}, 
the argument above with $K$ as in \eqref{cpt-K} leads us to the desired conclusion.
The proof is complete.
\end{proof}
\begin{remark}
In \eqref{high-int},
we are interested in the dependence of the constant $C$ on the friction $\alpha$, which determines the dependence of
the one in \eqref{decay} via \eqref{key-1}--\eqref{key-2} and \eqref{led} below.
For the Stokes system \eqref{Os-int}--\eqref{ave-zero} with $(\lambda,\eta)=(0,0)$,
the constant of the a priori estimate
of the lower order norm $\|u\|_{W^{1,q}(D)}+\|p\|_{q,D}$ is independent of the friction $\alpha$
as long as $D$ is non-axisymmetric, while it is uniform only for large $\alpha$ if $D$ is axisymmetric,
see \cite[Theorem 6.11]{A21}; indeed, non-uniformity near $\alpha=0$ in the latter case should be related to Lemma \ref{auxi}.
Even for the non-axisymmetric domain $D$, however, it seems to be still unclear whether the constant in \eqref{st-int}
is independent of $\alpha$.
\label{rem-fric}
\end{remark}

\section{Oseen resolvent in exterior domains}
\label{ext}

In this section we construct a parametrix of solutions to 
the Oseen resolvent system in the exterior domain $\Omega$ subject to the Navier 
boundary condition on
$\partial\Omega\in C^{2,1}$:
\begin{equation}
\begin{array}{ll}
\lambda u-\Delta u-\eta\cdot\nabla u+\nabla p=f, \qquad \mbox{div $u$}=0\;\;&\mbox{in $\Omega$}, \\
\nu\cdot u=0, \qquad [(2\mathbb Du)\nu]_\tau+\alpha u_\tau=0 &\mbox{on $\partial\Omega$}.
\end{array}
\label{Os-resol}
\end{equation}
The boundary condition at infinity is later taken into account in the sense of summability,
see \eqref{sum-weak} and \eqref{para-sum}.

In order to study the uniqueness of solutions to \eqref{Os-resol}, 
let us introduce the following lemmas, both of which connect $\mathbb Du$ with $\nabla u$.
The first one is 
Korn's first inequality in exterior domains,
that holds true without any boundary condition such as
$\nu\cdot u=0$ at $\partial\Omega$ nor $\mbox{div $u$}=0$ in $\Omega$. 
The result is due to Shibata and Soga \cite{SSo89}.
See also Ito \cite[Theorem 6.5]{It90} for an alternative proof.
Since the constant $c_0$ in Korn's inequality \eqref{korn} is involved in \eqref{os-spec2} below under less assumption
\eqref{ass-fric}, we are interested in the best constant.
Indeed, Ito \cite{It90} discussed this issue in detail for the half-space problem, however, not for the exterior problem.
\begin{lemma}
[{\cite[Theorem 1.5]{SSo89}}]
There is a constant $c_0=c_0(\Omega)>0$ such that
\begin{equation}
\|\nabla u\|_2^2\leq c_0\|\mathbb Du\|_2^2
\label{korn}
\end{equation}
for all $u\in \widehat H^1(\Omega)^3$, which is defined as the completion of $C_0^\infty(\overline{\Omega})^3$ with respect to
the norm $\|\nabla(\cdot)\|_2$.
\label{lem-korn}
\end{lemma}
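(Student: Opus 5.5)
We plan to prove Korn's inequality \eqref{korn} by combining the whole-space identity with a compactness argument near the boundary. We first treat $\mathbb R^3$. For $w\in C_0^\infty(\mathbb R^3)^3$, integration by parts gives
\[
2\|\mathbb Dw\|_{2,\mathbb R^3}^2=\|\nabla w\|_{2,\mathbb R^3}^2+\|\mbox{div $w$}\|_{2,\mathbb R^3}^2\geq \|\nabla w\|_{2,\mathbb R^3}^2 .
\]
This rests on the identity $\int\partial_jw_i\,\partial_iw_j\,dx=\int(\mbox{div $w$})^2dx$. By density the inequality extends to $\widehat H^1(\mathbb R^3)$. Hence the difficulty lies only near the obstacle, and we localize with a cut-off. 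Take $\phi\in C_0^\infty(B_3)$ with $\phi=1$ on $B_2$. We split $u=\phi u+(1-\phi)u$ and view $(1-\phi)u$ as a field on $\mathbb R^3$. Applying the whole-space estimate to $(1-\phi)u$ and using $\mathbb D((1-\phi)u)=(1-\phi)\mathbb Du-\tfrac12(\nabla\phi\otimes u+u\otimes\nabla\phi)$, we get
\[
\|\nabla u\|_2^2\leq C\big(\|\mathbb Du\|_2^2+\|\nabla(\phi u)\|_{2,\Omega_3}^2+\|u\|_{2,\Omega_3}^2\big).
\]

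For the bounded part we use the classical Korn second inequality on the Lipschitz (indeed $C^{2,1}$) bounded domain $\Omega_3$:
\[
\|\nabla v\|_{2,\Omega_3}\leq C\big(\|\mathbb Dv\|_{2,\Omega_3}+\|v\|_{2,\Omega_3}\big).
\]
This gives the intermediate bound
\[
\|\nabla u\|_2^2\leq C\big(\|\mathbb Du\|_2^2+\|u\|_{2,\Omega_3}^2\big)
\]
for all $u\in\widehat H^1(\Omega)$. Here $u\in L^2_{\rm loc}$ makes sense, because the Sobolev inequality gives $\|u\|_{6}\leq C\|\nabla u\|_2$ on $\widehat H^1$ (functions vanishing at infinity in the $L^6$ sense).

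The remaining step removes the lower-order term $\|u\|_{2,\Omega_3}$, and we expect it to be the main obstacle. We argue by contradiction. Suppose there are $u_k\in\widehat H^1(\Omega)$ with $\|\nabla u_k\|_2=1$ and $\|\mathbb Du_k\|_2\to0$. Passing to a subsequence, $u_k\rightharpoonup u$ weakly in $\widehat H^1$. By Rellich, $u_k\to u$ strongly in $L^2(\Omega_3)$. The limit satisfies $\mathbb Du=O$, so $u$ is a rigid motion $a+b\times x$ on the connected set $\Omega$. Since $u\in L^6(\Omega)$, we must have $a=b=0$, hence $u=0$. Then the intermediate bound gives $1=\|\nabla u_k\|_2^2\leq C(\|\mathbb Du_k\|_2^2+\|u_k\|_{2,\Omega_3}^2)\to0$, a contradiction. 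This yields $c_0$.

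The delicate points are two. First, we must justify the weak compactness and the $L^6$ embedding on the homogeneous space $\widehat H^1(\Omega)$. This uses the exterior-domain Sobolev inequality, for which we need an extension to $\mathbb R^3$ to ensure that elements decay (no constants) at infinity. Second, we must check that the cut-off error terms are controlled only by $\|u\|_{2,\Omega_3}$ and $\|\mathbb Du\|_2$. We note that no boundary condition is used anywhere, consistent with the statement.
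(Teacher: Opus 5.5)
The paper does not prove this lemma. It quotes it from Shibata--Soga (Theorem 1.5 of \cite{SSo89}) and mentions Ito \cite{It90} as an alternative source. Your argument is a correct, self-contained proof of the standard type, so it differs from the paper in supplying a proof rather than a citation.

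The identity $2\|\mathbb Dw\|_{2,\mathbb R^3}^2=\|\nabla w\|_{2,\mathbb R^3}^2+\|\mbox{div}\,w\|_{2,\mathbb R^3}^2$ controls $(1-\phi)u$. Korn's second inequality on the bounded $C^{2,1}$ domain $\Omega_3$ controls the part near the obstacle. The cut-off terms involve $\nabla\phi$, supported in $B_3\setminus B_2\subset\Omega_3$, so they are indeed bounded by $\|u\|_{2,\Omega_3}$. The lower-order term is then removed by compactness, with rigid motions $a+b\times x$ excluded by $u\in L^6(\Omega)$.

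The points you flag hold as you use them. In three dimensions, $\|u\|_6\le C\|\nabla u\|_2$ for $u\in C_0^\infty(\overline\Omega)$ needs no boundary condition on $\partial\Omega$. One way to see this is Hardy's inequality outside $B_1$, whose boundary term at $|x|=1$ has the favourable sign, combined with a Poincar\'e inequality in $\Omega_3$. Hence $\widehat H^1(\Omega)$ is a Hilbert space of functions embedded continuously in $H^1(\Omega_3)$. Weak convergence there plus Rellich's theorem then gives strong $L^2(\Omega_3)$ convergence to the weak limit, as your contradiction step needs. Complex fields follow by splitting into real and imaginary parts.

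What your route does not give is any quantitative information on $c_0$, since the contradiction step is purely qualitative. The paper explicitly notes that $c_0$ enters $\widetilde S_\eta$ in \eqref{os-spec2} and that the best constant would be of interest. Your whole-space identity does show $c_0\geq 2$ for every exterior domain: test with solenoidal fields supported away from the obstacle, for which equality holds. How much the obstacle raises $c_0$ above this stays hidden in the compactness argument. For the use of Lemma \ref{lem-korn} in Proposition \ref{ext-uni}, only the existence of $c_0$ matters, so your proof suffices there.
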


The second one is described in terms of
the Weingarten map of $\partial\Omega$ introduced in subsection \ref{wein}.
This suggests that the geometry of $\partial\Omega$ is involved behind the relation between $\mathbb Du$ and $\nabla u$.
The result can be found in \cite{J25} by Jin in slightly a different form.
\begin{lemma}
[{\cite[Lemma 4.2]{J25}}]
Let $u\in H^1(\Omega)\cap H^2_{\rm loc}(\overline{\Omega})$ be a complex valued vector field
satisfying $\mbox{\rm div $u$}=0$ and $\nu\cdot u|_{\partial\Omega}=0$.
Then we have
\begin{equation}
2\|\mathbb Du\|_2^2=\|\nabla u\|_2^2-\int_{\partial\Omega}[(\nabla N)u]\cdot \overline{u}\,d\sigma,
\label{geo-ext}
\end{equation}
where $-\nabla N$ is the Weingarten map of $\partial\Omega$ in the direction of $\nu$.
\label{Du-wein}
\end{lemma}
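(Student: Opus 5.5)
The plan is to reproduce the integration by parts from \eqref{geo} in Proposition \ref{int-uni}, now on the exterior domain. The boundary term at infinity has to be removed by a cut-off argument, and the boundary term on $\partial\Omega$ is rewritten with Lemma \ref{lem-wein}. First assume $u\in C_0^\infty(\overline{\Omega})$ with $\mbox{div $u$}=0$ and $\nu\cdot u|_{\partial\Omega}=0$, and treat the general case by approximation at the end.

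For smooth compactly supported $u$ the pointwise identity
\[
2|\mathbb Du|^2=|\nabla u|^2+\mbox{Re}\,(\nabla u)^\top:\nabla\overline{u}
\]
holds. Integrating by parts twice and using $\mbox{div $u$}=0$ gives
\[
\int_\Omega \partial_j u_i\,\partial_i\overline{u}_j\,dx=\int_{\partial\Omega}[(\nabla u)^\top\nu]\cdot\overline{u}\,d\sigma .
\]
Concretely, write $\partial_ju_i\partial_i\overline{u}_j=\partial_i(\partial_ju_i\,\overline{u}_j)-\partial_j(\mbox{div}\,u)\overline{u}_j$; the second term vanishes. Because $u$ is tangential on $\partial\Omega$, only the tangential part of $(\nabla u)^\top\nu$ contributes. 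Lemma \ref{lem-wein} identifies that part as $-(\nabla N)u$, so the integrand becomes $-[(\nabla N)u]\cdot\overline{u}$. This quantity is real by the symmetry (iii) of $\nabla N$, so the real part may be dropped, and \eqref{geo-ext} follows for such $u$. The sign convention of $\nu$ (outward from $\Omega$) is consistent with the computation in \eqref{geo}.

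For general $u\in H^1(\Omega)\cap H^2_{\rm loc}(\overline{\Omega})$, I would approximate by truncation. Take a cut-off $\phi_R(x)=\phi(x/R)$ with $\phi=1$ on $B_1$ and $\phi=0$ outside $B_2$. The field $\phi_Ru$ is no longer solenoidal, so I would correct it with a Bogovskii function $w_R$ on the annulus $A_R=B_{2R}\setminus B_R$ solving $\mbox{div $w_R$}=-\nabla\phi_R\cdot u$. The compatibility condition $\int_{A_R}\nabla\phi_R\cdot u\,dx=0$ holds because $\mbox{div $u$}=0$ and $\nu\cdot u=0$ on $\partial\Omega$, and the scale-invariant bound gives $\|\nabla w_R\|_{2}\le C\|u\nabla\phi_R\|_{2}\le C\|u\|_{2,A_R}\to 0$. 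The field $u_R=\phi_Ru+w_R$ is then solenoidal, tangential on $\partial\Omega$ (since $u_R=u$ near $\partial\Omega$), and compactly supported.

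To apply the smooth computation to $u_R$, which is only $H^2$ near $\partial\Omega$, a density argument in $H^2(\Omega_{3R})$ is needed. The boundary term is stable in $L^2(\partial\Omega)$ by the trace statement of Lemma \ref{lem-wein}, so the identity holds for $u_R$ directly. Alternatively, one can avoid Bogovskii entirely: keep $\phi_R$ inside the integration by parts and bound the error terms by $\int|\nabla u||u||\nabla\phi_R|\le CR^{-1}\|\nabla u\|_2\|u\|_2\to0$.

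The identity holds for $u_R$, and $\nabla u_R\to\nabla u$ in $L^2$, so letting $R\to\infty$ yields \eqref{geo-ext}; the boundary term is unchanged for $R\ge1$. I expect the main obstacle to be this justification at infinity together with the low-regularity boundary integration by parts. The direct $\phi_R$-weighted integration by parts is the cleaner route, since $u\in L^2$ makes the commutator terms vanish.
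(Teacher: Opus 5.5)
Your proposal is correct. The route you end up recommending is exactly the paper's proof: multiply by the cut-off $\phi_R$, integrate by parts using $\mbox{div $u$}=0$, identify the $\partial\Omega$ term via Lemma \ref{lem-wein}, and let $R\to\infty$. The one difference is in the annulus term. The paper controls it via $|u||\nabla u|\in L^{3/2}(\Omega)$ (so only $u\in L^6$ is used) together with the $\rho$-independence of $\|\nabla\phi_\rho\|_3$. You use $u\in L^2$ and $|\nabla\phi_R|\le C/R$, which is equally valid under the hypothesis $u\in H^1(\Omega)$.

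The Bogovskii-corrected truncation $u_R=\phi_Ru+w_R$ that you sketch first also works, but it is unnecessary. The weighted identity only needs $\mbox{div $u$}=0$ pointwise and Green's formula for $H^2_{\rm loc}(\overline{\Omega})$ fields, never a solenoidal compactly supported approximant.
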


\begin{proof}
The proof is essentially the same as in deduction of \eqref{geo} in bounded domains.
To justify the computation in exterior domains, nevertheless, 
using a cut-off function
$\phi\in C_0^\infty(B_2;[0,1])$ such that $\phi=1$ in $B_1$,
we multiply $\Delta u=\mbox{div $(2\mathbb Du)$}$ by $\phi_\rho \overline{u}$ with $\phi_\rho(x)=\phi(x/\rho)$ and then integrate
the resulting equality
to find 
\[
2\int_\Omega |\mathbb Du|^2\phi_\rho\,dx
=\int_\Omega |\nabla u|^2\phi_\rho\,dx
+\int_{\partial\Omega}[(\nabla u)^\top\nu]\cdot \overline{u}\,d\sigma
-\int_{A_\rho}[(\nabla u)^\top\nabla\phi_\rho]\cdot \overline{u}\,dx,
\]
where $A_\rho=\{\rho<|x|<2\rho\}$.
Since $|u||\nabla u|\in L^{3/2}(\Omega)$ and since $\|\nabla \phi_\rho\|_3$ is independent of $\rho$, letting $\rho\to\infty$ leads us to
\eqref{geo-ext} by taking into account Lemma \ref{lem-wein}.
\end{proof}.
\begin{proposition}
Suppose that a constant vector $\eta\in\mathbb R^3$ and a nonnegative function $\alpha\in C(\partial\Omega)$
fulfill the relation \eqref{ass-fric} for every $x\in\partial\Omega$, where $\nu(x)$ denotes the outward unit normal to
the boundary $\partial\Omega\in C^{2,1}$.
Let $q,\, r,\, s\in (1,\infty)$ and
\begin{equation*}
\begin{array}{ll}
\lambda\in (\mathbb C\setminus \widetilde S_\eta)\cup\{0\}\qquad &\mbox{for $\eta\in\mathbb R^3\setminus \{0\}$}, \\
\lambda\in \mathbb C\setminus (-\infty,0) &\mbox{for $\eta=0$},
\end{array}
\end{equation*}
with
\begin{equation}
\widetilde S_\eta:=\big\{\lambda\in\mathbb C;\; \frac{c_0}{2}|\eta|^2\mbox{\rm Re $\lambda$}+(\mbox{\rm Im $\lambda$})^2\leq 0\big\},
\label{os-spec2}
\end{equation}
where $c_0$ is the constant in \eqref{korn}.
Then the only solution $(u,p)\in W^{2,q}_{\rm loc}(\overline{\Omega})\times W^{1,q}_{\rm loc}(\overline{\Omega})$
to \eqref{Os-resol} 
with $f=0$ satisfying
\begin{equation}
u\in L^r(\Omega), \qquad \nabla p\in L^s(\Omega)
\label{sum-weak}
\end{equation}
is $(u,p)=(0,p_\infty)$ with some constant $p_\infty\in\mathbb C$.

For $x\in\partial\Omega$, let $\kappa(x)\leq 0$ be the least eigenvalue of the Weingarten map $-\nabla N$ of $\partial\Omega$
in the direction of $\nu$
(see subsection \ref{wein}).
In addition to the conditions above, suppose that 
$\eta\in\mathbb R^3\setminus\{0\}$ and $\alpha\in C(\partial\Omega)$ fulfill the relation
\eqref{ass-fric0} for every $x\in\partial\Omega$.
Let $\lambda\in (\mathbb C\setminus S_\eta)\cup\{0\}$, where $S_\eta$ is given by \eqref{os-spec-0}.
Then the same uniqueness assertion above holds true.
\label{ext-uni}
\end{proposition}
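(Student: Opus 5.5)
The plan is to reduce to the energy argument of Proposition \ref{int-uni}. The exterior domain forces two extra steps: first upgrade the weak summability \eqref{sum-weak} to $u\in H^1(\Omega)\cap H^2_{\rm loc}(\overline\Omega)$ with $\nabla p\in L^2$ near infinity, and then justify the integrations by parts with cut-offs, as in the proof of Lemma \ref{Du-wein}.

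\textbf{Step 1: regularity and decay at infinity.} Take a cut-off $\psi$ with $\psi=0$ near $B_1$ and $\psi=1$ outside $B_2$. The pair $(\psi u,\psi p)$ solves the whole-space Oseen resolvent system \eqref{Os-wh}. Its right-hand side $g=-2\nabla\psi\cdot\nabla u-(\Delta\psi)u-(\eta\cdot\nabla\psi)u+(\nabla\psi)p$ has compact support, and its divergence $\nabla\psi\cdot u$ is also compactly supported. The divergence is removed with a Bogovskii correction on $B_2\setminus B_1$, using $\int\nabla\psi\cdot u=\int_{\partial\Omega}\nu\cdot u=0$. After subtracting a suitable constant from $p$, the compactly supported data lie in every $L^q_{[R]}$. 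The solution of \eqref{Os-wh} in the class $u\in L^r$, $\nabla p\in L^s$ is unique up to polynomials and constants in the pressure; the Fourier transform argument works for $\lambda\in(\mathbb C\setminus S_\eta)\cup\{0\}$ since the symbol vanishes only at $\xi=0$. Hence $\psi u=E_\eta(\lambda)g$ and $\nabla(\psi p)=\nabla\Pi g$, modulo that constant. The known decay of $E_\eta(\lambda)$ on compactly supported data (Galdi \cite{G-b}), or simply \eqref{bdd-wh} and \eqref{fmt}, then gives $\nabla u\in L^2$, $u\in L^6$, $|u||\nabla u|\in L^{3/2}$ and $p-p_\infty=O(|x|^{-2})$. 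In fact $u\in L^2$ whenever $\lambda\neq0$, and also when $\lambda=0$ with $\eta\neq0$, since $u\in L^2$ near the wake. Interior and boundary regularity from \cite{A21,SS07} give $u\in H^2_{\rm loc}(\overline\Omega)$.

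\textbf{Step 2: energy identity.} Multiply by $\phi_\rho\overline u$ as in Lemma \ref{Du-wein}, integrate, and let $\rho\to\infty$. The cut-off error terms involve $|\nabla u||u||\nabla\phi_\rho|$, $|p-p_\infty||u||\nabla\phi_\rho|$ and $|\eta||u|^2|\nabla\phi_\rho|$, and all of them vanish in the limit by the decay from Step 1. This yields the analogues of \eqref{ene-re} and \eqref{ene-im} over $\Omega$:
\[
(\mbox{Re }\lambda)\|u\|_2^2+2\|\mathbb Du\|_2^2+\int_{\partial\Omega}\Big(\alpha-\frac{\eta\cdot\nu}{2}\Big)|u|^2\,d\sigma=0,\qquad (\mbox{Im }\lambda)\|u\|_2^2=\mbox{Im}\int_\Omega(\eta\cdot\nabla u)\cdot\overline u\,dx .
\]
When $\lambda=0$ the $L^2$ terms do not appear, and one only needs $\nabla u\in L^2$, $u\in L^6$.

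\textbf{Step 3: conclusion.} Under \eqref{ass-fric}, if $\mbox{Re}\,\lambda\ge0$ then $\mathbb Du=0$, so Korn's inequality \eqref{korn} gives $\nabla u=0$, and $u\in L^6$ forces $u=0$. If $\mbox{Re}\,\lambda<0$, \eqref{korn} gives $\|\nabla u\|_2^2\le c_0\|\mathbb Du\|_2^2\le -\tfrac{c_0}{2}(\mbox{Re}\,\lambda)\|u\|_2^2$. Then $(\mbox{Im}\,\lambda)^2\|u\|_2^4\le|\eta|^2\|\nabla u\|_2^2\|u\|_2^2$, so $\lambda\notin\widetilde S_\eta$ forces $u=0$. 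For $\eta=0$ and $\lambda\notin\mathbb R$ the imaginary-part identity gives $u=0$ directly. Under \eqref{ass-fric0}, replace \eqref{korn} by Lemma \ref{Du-wein} together with $-(\nabla N)\ge\kappa$, as in \eqref{geo2}. This yields $(\mbox{Re}\,\lambda)\|u\|_2^2+\|\nabla u\|_2^2\le0$, and the same computation gives $S_\eta$. Finally $\nabla p=0$, so $p=p_\infty$.

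\textbf{Main obstacle.} The hardest step is Step 1, namely the decay at infinity that justifies Step 2, especially for $\lambda=0$ or $\lambda$ on the imaginary axis with only the weak class \eqref{sum-weak} given. I would handle it via the uniqueness of tempered solutions in $\mathbb R^3$ and the explicit representation \eqref{formula}.
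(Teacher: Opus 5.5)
Your Steps 1--3 follow the paper's own proof. The cut-off, Bogovskii-corrected pair is written as a whole-space Oseen solution with compactly supported data, which upgrades \eqref{sum-weak} to \eqref{improved-sum}. The identities \eqref{cut-ene-re2}--\eqref{cut-ene-im2} are then obtained with $\phi_\rho$ and $\rho\to\infty$, and the conclusion comes from \eqref{korn} or Lemma \ref{Du-wein}.

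\textbf{The gap.} One claim is false, and your limit in Step 2 relies on it when $\lambda=0$, $\eta\neq0$. You assert that then $u\in L^2$ ``near the wake''. This is wrong in general. The Oseen fundamental solution is of size $|x|^{-1}$ inside a paraboloidal wake whose cross-section has area of order $|x|$. Its $L^2$-norm outside a ball therefore diverges logarithmically, and it lies in $L^r$ only for $r>2$.

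Your fallback, that for $\lambda=0$ one only needs $\nabla u\in L^2$ and $u\in L^6$, does not close the gap. The convective cut-off term $\frac12\int_{A_\rho}|u|^2\,\eta\cdot\nabla\phi_\rho\,dx$ must vanish as $\rho\to\infty$. H\"older with $\|\nabla\phi_\rho\|_3$ bounded requires $u\in L^3$ near infinity. With only $u\in L^6$ you get $\|u\|_{6,A_\rho}^2\|\nabla\phi_\rho\|_{3/2}$, and $\|\nabla\phi_\rho\|_{3/2}\sim\rho$. Even an isotropic pointwise bound $|u|=O(|x|^{-1})$ would only bound this term by a constant, not make it vanish. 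Nor can \eqref{bdd-wh}, which is all that is available at $\lambda=0$, supply $L^3$. It gives $\nabla^2u\in L^q$ for all $q$, hence by Sobolev embedding only the Stokes-type range $u\in L^r$, $r>3$.

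\textbf{The repair.} Do what the paper does: the Oseen-specific (wake) estimates of \cite{G-b} give $u\in L^{r_0}$ for every $r_0\in(2,\infty)$ when $\eta\neq0$. Then $|u|^2\in L^{3/2}(\mathbb R^3\setminus B_\rho)$ and the term tends to zero. For $\eta=0$ the term is absent, and for $\lambda\neq0$ your $H^2$ bound from \eqref{fmt} already gives $u\in L^3$.

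\textbf{A smaller omission.} Step 1 is carried out for $\lambda\in(\mathbb C\setminus S_\eta)\cup\{0\}$, while the first half of the statement concerns $\lambda\notin\widetilde S_\eta$. You therefore need $S_\eta\subset\widetilde S_\eta$. This holds because $c_0\ge 2$, since $\|\nabla u\|_2^2=2\|\mathbb Du\|_2^2$ for $u\in C^\infty_{0,\sigma}(\Omega)$. With these corrections your argument goes through.
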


\begin{proof}
By the regularity theory for the Stokes system subject to the Navier 
boundary condition
\cite{A21,A14,SS07} together with a cut-off procedure, we may assume that
$(u,p)\in H^2_{\rm loc}(\overline{\Omega})\times H^1_{\rm loc}(\overline{\Omega})$ even though $q$ is close to $1$.
On the other hand, weak summability \eqref{sum-weak} implies that $(u,p-p_\infty)$ with some constant $p_\infty\in\mathbb C$ 
behaves like the fundamental solution to the Oseen resolvent system in the whole space $\mathbb R^3$.
Even for the worst case $\lambda=0$ we have
\begin{equation}
\begin{split}
&(\nabla u,p-p_\infty)\in L^{q_0}(\Omega)\quad\forall\,q_0\in (3/2,\infty), \\
&u\in L^{r_0}(\Omega)\quad\left\{
\begin{array}{ll}
\forall\,r_0\in (2,\infty)\quad&\mbox{for $\eta\in\mathbb R^3\setminus\{0\}$},  \\
\forall\,r_0\in (3,\infty)&\mbox{for $\eta=0$}.
\end{array}
\right. 
\end{split}
\label{improved-sum}
\end{equation}
This is verified by considering the equation in $\mathbb R^3$ that the pair $((1-\phi)u+w,(1-\phi)p)$ obeys,
where $\phi$ is a suitable cut-off function and $w$ is a correction term 
(to be compactly supported since $\nu\cdot u=0$ at $\partial\Omega$)
to recover the solenoidal condition.
For the details of this argument, see for instance \cite[Theorem 1]{H-b}.

Let $\phi_\rho$ be the same cut-off function as in the proof of Lemma \ref{Du-wein}.
We multiply the equation
\[
\lambda u-\Delta u-\eta\cdot\nabla u+\nabla (p-p_\infty)=0
\]
by $\phi_\rho\overline{u}$, integrate and use the boundary condition  
to furnish
\begin{equation}
\begin{split}
&\lambda\int_\Omega\phi_\rho|u|^2\,dx
+2\int_\Omega\phi_\rho|\mathbb Du|^2\,dx
+\int_{\partial\Omega}\alpha|u|^2\,d\sigma  \\
&+\int_{A_\rho}\mathbb T(u,p-p_\infty):(\overline{u}\otimes\nabla\phi_\rho)\,dx
-\int_\Omega (\eta\cdot\nabla u)\cdot\overline{u}\phi_\rho\,dx=0,
\end{split}
\label{cut-ene}
\end{equation}
where $A_\rho=\{\rho<|x|<2\rho\}$.
By taking into account
\[
\int_\Omega \Big(\big[(\eta\cdot\nabla u)\cdot\overline{u}+u\cdot[\eta\cdot\nabla\overline{u})\big]\phi_\rho+|u|^2\eta\cdot\nabla\phi_\rho\Big)\,dx
=\int_\Omega\mbox{div $(\phi_\rho\eta|u|^2)$}\,dx
=\int_{\partial\Omega}\eta\cdot\nu|u|^2\,d\sigma,
\]
the real and imaginary parts of \eqref{cut-ene} provide us with
\begin{equation}
\begin{split}
&(\mbox{Re $\lambda$})\int_\Omega\phi_\rho|u|^2\,dx
+2\int_\Omega\phi_\rho|\mathbb Du|^2\,dx
+\int_{\partial\Omega}\alpha|u|^2\,d\sigma  \\
&+\mbox{Re}\int_{A_\rho}\mathbb T(u,p-p_\infty):(\overline{u}\otimes\nabla\phi_\rho)\,dx
-\frac{1}{2}\int_{\partial\Omega}\eta\cdot\nu|u|^2\,d\sigma
+\frac{1}{2}\int_{A_\rho} |u|^2\eta\cdot\nabla\phi_\rho\,dx=0
\end{split}
\label{cut-ene-re}
\end{equation}
and
\begin{equation}
(\mbox{Im $\lambda$})\int_\Omega\phi_\rho|u|^2\,dx
+\mbox{Im}\int_{A_\rho}\mathbb T(u,p-p_\infty):(\overline{u}\otimes\nabla\phi_\rho)\,dx
-\mbox{Im}\int_\Omega (\eta\cdot\nabla u)\cdot\overline{u}\phi_\rho\,dx=0.
\label{cut-ene-im}
\end{equation}
Since we know from \eqref{improved-sum} that $\nabla u\in L^2(\Omega)$ as well as
\[
|\mathbb T(u,p-p_\infty)||u|+\frac{1}{2}|u|^2\eta\in L^{3/2}(\mathbb R^3\setminus B_\rho)
\]
and since $\|\nabla\phi_\rho\|_3$ is independent of $\rho$,
letting $\rho\to\infty$ in \eqref{cut-ene-re}--\eqref{cut-ene-im} gives
\begin{equation}
(\mbox{Re $\lambda$})\|u\|_2^2+2\|\mathbb Du\|_2^2+\int_{\partial\Omega}\left(\alpha-\frac{\eta\cdot\nu}{2}\right)|u|^2\,d\sigma=0,
\label{cut-ene-re2}
\end{equation}
\begin{equation}
(\mbox{Im $\lambda$})\|u\|_2^2-\mbox{Im}\int_\Omega(\eta\cdot\nabla u)\cdot\overline{u}\,dx=0.
\label{cut-ene-im2}
\end{equation}

Once we have those, 
as in the proof of the first half of Proposition \ref{int-uni} based on \eqref{ene-re}--\eqref{ene-im}, 
we employ \eqref{ass-fric} to get $u=0$ immediately for $\lambda\in \overline{\mathbb C_+}$ 
(resp. $\lambda\in\mathbb C\setminus (-\infty,0)$) when $\eta\in\mathbb R^3\setminus\{0\}$ (resp. $\eta=0$).
Unlike the case of bounded domains, $\mathbb Du=O$ (rigid motion) leads to $u=0$ on account of summability
\eqref{sum-weak} at infinity when $\mbox{Re $\lambda$}=0$.
For the case $\mbox{Re $\lambda$}<0$, we rely on Korn's inequality \eqref{korn} to deduce 
\[
(\mbox{Re $\lambda$})\|u\|_2^2+\frac{2}{c_0}\|\nabla u\|_2^2
+\int_{\partial\Omega}\left(\alpha-\frac{\eta\cdot\nu}{2}\right)|u|^2\,d\sigma\leq 0
\]
from \eqref{cut-ene-re2}.
This together with \eqref{cut-ene-im2} implies $u=0$ provided that $\lambda\in\mathbb C\setminus \widetilde S_\eta$
along the same line as in the proof of the second half of Proposition \ref{int-uni}.
Obviously, $p=p_\infty$ follows from $\nabla(p-p_\infty)=0$ and \eqref{improved-sum}.

Finally, with Lemma \ref{Du-wein} at hand,
the latter part under the assumption \eqref{ass-fric0} is verified as in the proof of Proposition \ref{int-uni}.
The proof is complete.
\end{proof}

We now construct a solution to \eqref{Os-resol}. 
Let $(E_\eta(\lambda),\Pi)$ be the solution operator \eqref{formula}--\eqref{formula-p}
in the whole space $\mathbb R^3$, and $(M_\eta(\lambda),N_\eta(\lambda))$
the one for \eqref{Os-int}--\eqref{ave-zero}
given by Proposition \ref{prop-int} with the specific bounded domains
\begin{equation}
D=\Omega_3=\Omega\cap B_3, \qquad 
D_0=A_1=\{1<|x|<2\}
\label{ext-cut}
\end{equation}
and the extended friction coefficient
\begin{equation}
\widetilde\alpha(x):=\left\{
\begin{array}{ll}
\alpha(x) \quad& (x\in \partial\Omega), \\
\alpha_0 &(x\in\partial B_3),
\end{array}
\right.
\qquad
\alpha_0>\frac{|\eta|}{2}+\frac{1}{3},
\label{new-fric}
\end{equation}
where the constant $\alpha_0$ is fixed by taking into account
the curvature of $\partial B_3$. 
Notice that 
the extended one \eqref{new-fric} actually fulfills
the assumption \eqref{ass-fric2} (resp. \eqref{ass-fric3})
for every $x\in \partial\Omega_3$
as long as \eqref{ass-fric} (resp. \eqref{ass-fric0})
is satisfied at $\partial\Omega$.
Even for the full slip case $\alpha\equiv 0$ at $\partial\Omega$, 
the extended one \eqref{new-fric}
is a positive friction at $\partial B_3$
and, hence, $\Omega_3$ (and thus the obstacle $\mathbb R^3\setminus\Omega$) is allowed to be axisymmetric along a certain axis.
To include an axisymmetric obstacle with the 
full slip condition, there is the other way adopted by Shimada and Yamaguchi \cite{SY08}, who replaced $B_3$ 
by a non-axisymmetric bounded domain $\widetilde B$ in \eqref{ext-cut}.

We take a cut-off function $\phi\in C_0^\infty(B_2;[0,1])$ such that $\phi=1$ in $B_1$, and use the Bogovskii operator
$\mathbb B$ in $A_1=\{1<|x|<2\}$ that is defined as follows.
The Dirichlet problem for the equation of continuity in a bounded domain $A_1$ admits a lot of solutions if the forcing term $g$ satisfies
the compatibility condition $\int_{A_1}g\,dx=0$.
Among those solutions, a particular one discovered by Bogovskii \cite{Bog79}
is useful, see also \cite{BoS90, G-b}; in fact, there is a linear operator $\mathbb B: C_0^\infty(A_1)\to C_0^\infty(A_1)^3$
such that, for $q\in (1,\infty)$ and integer $k\geq 0$,
\begin{equation}
\|\nabla^{k+1}\mathbb Bg\|_{q,A_1}\leq C\|\nabla^kg\|_{q,A_1}
\label{bog-est}
\end{equation}
with some $C=C(q,k)>0$, which is invariant with respect to dilation of the domain $A_1$, and that
\[
\mbox{div $(\mathbb Bg)$}=g\qquad\mbox{if}\;\int_{A_1}g(x)\,dx=0.
\]
By continuity, $\mathbb B$ extends to a bounded operator from $W^{k,q}_0(A_1)$ to $W^{k+1,q}_0(A_1)$.

For $R\geq 2$, we put
\begin{equation}
L^q_{[R]}(\Omega):=\{f\in L^q(\Omega);\; f(x)=0\;\mbox{a.e. $\Omega\setminus B_R$}\}.
\label{data-sp}
\end{equation}
Given $f\in L^q_{[R]}(\Omega)$ and $\lambda\in\overline{\mathbb C_+}$
(or $\lambda\in (\mathbb C\setminus S_\eta)\cup\{0\}$,
depending on the assumption, see Proposition \ref{prop-int}),
we set
\begin{equation}
\begin{split}
&v=R_\eta(\lambda)f:=(1-\phi)E_\eta(\lambda)f+\phi M_\eta(\lambda)f
+\mathbb B\left[\big(E_\eta(\lambda)f-M_\eta(\lambda)f\big)\cdot\nabla\phi\right],  \\
&\sigma=Q_\eta(\lambda)f:=(1-\phi)\Pi f+\phi \widetilde N_\eta(\lambda)f,
\end{split}
\label{parametrix}
\end{equation}
where $f$ is understood as its zero extension (resp. restriction) to $\mathbb R^3$ (resp. $\Omega_3$)
and the pressure $\widetilde N_\eta(\lambda)f$ in $\Omega_3$ is chosen in such a way that
\begin{equation}
\widetilde N_\eta(\lambda)f:=N_\eta(\lambda)f+\frac{1}{|A_1|}\int_{A_1}(\Pi f)(x)\,dx.
\label{int-press}
\end{equation}
Because of this choice, we have the Poincar\'e inequality
\begin{equation}
\|\Pi f-\widetilde N_\eta(\lambda)f\|_{q,A_1}\leq C\|\nabla (\Pi f-N_\eta(\lambda)f)\|_{q,A_1}.
\label{poin}
\end{equation}
The pair $(v,\sigma)$ obeys
\begin{equation}
\begin{array}{ll}
\lambda v-\Delta v-\eta\cdot\nabla v+\nabla\sigma=f+T_\eta(\lambda)f, \quad\mbox{div $v$}=0\;\;&\mbox{in $\Omega$}, \\
\nu\cdot v=0,\qquad [(2\mathbb Dv)\nu]_\tau+\alpha v_\tau=0 &\mbox{on $\partial\Omega$},
\end{array}
\label{ext-parame}
\end{equation}
and satisfies
\begin{equation}
v\in L^r(\Omega), \qquad \sigma\in L^s(\Omega), \qquad\nabla\sigma\in L^q(\Omega)
\label{para-sum}
\end{equation}
with some $r,\, s\in (1,\infty)$, where
\begin{equation}
\begin{split}
T_\eta(\lambda)f
&=2\nabla\phi\cdot\nabla(E_\eta(\lambda)f-M_\eta(\lambda)f)
+(\Delta\phi+\eta\cdot\nabla\phi)(E_\eta(\lambda)f-M_\eta(\lambda)f)  \\
&\quad -\Delta \mathbb B[(E_\eta(\lambda)f-M_\eta(\lambda)f)\cdot\nabla\phi]
+\lambda\mathbb B[(E_\eta(\lambda)f-M_\eta(\lambda)f)\cdot\nabla\phi]  \\
&\quad -\eta\cdot\nabla\mathbb B[(E_\eta(\lambda)f-M_\eta(\lambda)f)\cdot\nabla\phi]
-(\nabla\phi)(\Pi f-\widetilde N_\eta(\lambda)f).
\end{split}
\label{remain}
\end{equation}
In view of \eqref{parametrix},
the summability \eqref{para-sum} at infinity 
is determined by $\big(E_\eta(\lambda)f, \Pi f)$, which behaves like the fundamental solution 
at infinity since $f$ has a bounded support.
Concerning the 
velocity $v$, even in the worst case $(\lambda,\eta)=(0,0)$ as well as $q\in (1,3/2)$,
one can take $r\in (3,q_{**}]$, where $1/q_{**}=1/q-2/3$.
For the other cases, better summability (with smaller $r$) is available.
As for the pressure $\sigma$, one can take $s\in (3/2,q_*]$ (resp. $s\in (3/2,\infty)$) when 
$q\in (1,3)$ (resp. $q\in [3,\infty)$), where $1/q_*=1/q-1/3$.
Finally, $\nabla\sigma\in L^q(\Omega)$ follows from \eqref{fmt-p} and \eqref{high-int}.
\begin{proposition}
Suppose that a constant vector $\eta\in\mathbb R^3$ and a nonnegative function $\alpha\in C(\partial\Omega)$
fulfill the relation \eqref{ass-fric} for every $x\in\partial\Omega$.
Let $q\in (1,\infty)$, $R\geq 2$ and
\[
\begin{array}{ll}
\lambda\in\overline{\mathbb C_+}&\mbox{for $\eta\in\mathbb R^3\setminus\{0\}$}, \\
\lambda\in \mathbb C\setminus (-\infty,0)\qquad &\mbox{for $\eta=0$}.
\end{array}
\]
Then the operator $1+T_\eta(\lambda)$ is bijective on $L^q_{[R]}(\Omega)$ and
\begin{equation}
u=R_\eta(\lambda)(1+T_\eta(\lambda))^{-1}f, \qquad p=Q_\eta(\lambda)(1+T_\eta(\lambda))^{-1}f
\label{ext-sol}
\end{equation}
provides a solution to \eqref{Os-resol} for every $f\in L^q_{[R]}(\Omega)$, that is unique within the class specified in 
Proposition \ref{ext-uni},
where $L^q_{[R]}(\Omega)$ is given by \eqref{data-sp}.

For $x\in\partial\Omega$,
let $\kappa(x)\leq 0$ be the least eigenvalue of the Weingarten map $-\nabla N$ of $\partial\Omega$ in the direction of $\nu$
(see subsection \ref{wein}).
Suppose in addition that $\eta\in\mathbb R^3\setminus \{0\}$ and $\alpha\in C(\partial\Omega)$ fulfill the relation
\eqref{ass-fric0}
for every $x\in \partial\Omega$.
Then the same conclusion as above holds true for every
$\lambda\in (\mathbb C\setminus S_\eta)\cup\{0\}$,
where $S_\eta$ is given by \eqref{os-spec-0}.
\label{prop-parame}
\end{proposition}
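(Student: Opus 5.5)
The plan is the standard Fredholm argument of Kobayashi--Shibata type. It has three steps: show that $T_\eta(\lambda)$ is compact on $L^q_{[R]}(\Omega)$, show that $1+T_\eta(\lambda)$ is injective, and then conclude bijectivity from the Fredholm alternative.

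\emph{Compactness.} By \eqref{remain}, $T_\eta(\lambda)f$ is supported in $\overline{A_1}\subset B_2$, so $T_\eta(\lambda)$ maps $L^q_{[R]}(\Omega)$ into itself. Every term in \eqref{remain} involves at most first derivatives of $E_\eta(\lambda)f-M_\eta(\lambda)f$ on $A_1$. Two of them need a separate check:
\begin{itemize}
\item The term $\Delta\mathbb B[\cdots]$ is controlled by \eqref{bog-est} with $k=1$ through $\|\nabla[(E_\eta(\lambda)f-M_\eta(\lambda)f)\cdot\nabla\phi]\|_{q,A_1}$.
\item The pressure term is controlled by the Poincar\'e inequality \eqref{poin}, together with \eqref{fmt-p} and \eqref{high-int}.
\end{itemize}
In fact each term of $T_\eta(\lambda)f$ is bounded in $W^{1,q}(A_1)$ by $C\|f\|_q$, using \eqref{fmt}, \eqref{bdd-wh} (the latter covers $\lambda=0$) and Proposition \ref{prop-int}. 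Since $W^{1,q}(A_1)\hookrightarrow L^q(A_1)$ is compact by Rellich's theorem, $T_\eta(\lambda)$ is compact on $L^q_{[R]}(\Omega)$. For the pressure part this needs $\Pi f-\widetilde N_\eta(\lambda)f\in W^{1,q}(A_1)$, which again follows from \eqref{poin}.

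\emph{Injectivity.} Suppose $f\in L^q_{[R]}(\Omega)$ satisfies $f+T_\eta(\lambda)f=0$. Then by \eqref{ext-parame} the pair $(v,\sigma)=(R_\eta(\lambda)f,Q_\eta(\lambda)f)$ solves \eqref{Os-resol} with zero right-hand side. It lies in $W^{2,q}_{\rm loc}(\overline\Omega)\times W^{1,q}_{\rm loc}(\overline\Omega)$ and satisfies \eqref{para-sum}, so $v\in L^r(\Omega)$ and $\nabla\sigma\in L^q(\Omega)$. Proposition \ref{ext-uni} applies for these $\lambda$: under \eqref{ass-fric}, $\overline{\mathbb C_+}$ avoids $\widetilde S_\eta$ except at the origin, which is allowed. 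Hence $v=0$ and $\sigma=p_\infty$. Since $\sigma\in L^s(\Omega)$, we get $p_\infty=0$, so $\sigma=0$.

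We must now recover $f=0$; this is the main obstacle. On $\Omega\setminus B_2$ we have $v=E_\eta(\lambda)f$ and $\sigma=\Pi f$, so $(E_\eta(\lambda)f,\Pi f)$ vanishes outside $B_2$. On $\Omega_1$ we have $v=M_\eta(\lambda)f$ and $\sigma=\widetilde N_\eta(\lambda)f$, so these vanish there as well. The plan is to follow Kobayashi--Shibata.
\begin{enumerate}
\item Let $w:=E_\eta(\lambda)f$ and $\pi:=\Pi f$. These solve \eqref{Os-wh} in $\mathbb R^3$ with $f$ extended by zero, and they vanish outside $B_2$.
\item Consider $(M_\eta(\lambda)f,N_\eta(\lambda)f)$ in $\Omega_3$; it vanishes in $\Omega_1$. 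Compare it with $(w,\pi)$ on the region where both live. Their difference solves the homogeneous Oseen system in $\Omega_3\setminus\overline{B_1}$, vanishes near $\partial B_3$ through the $w$-part, and meets Navier data there.
\item Use unique continuation for the Oseen/Stokes system, or more simply the uniqueness of Proposition \ref{int-uni} applied to the glued field, to show that $M_\eta(\lambda)f$ coincides with the zero-extension of $w$ restricted suitably. This forces $w=0$ near $\partial\Omega$.
\item Conclude that $w$ solves a compactly supported whole-space problem with $w=0$ on $\Omega_1$, so $f=\lambda w-\Delta w-\eta\cdot\nabla w+\nabla\pi$ vanishes on $\Omega_1$.
\item Define $\tilde f:=f$ on $\Omega$ and $0$ in the obstacle. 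Whole-space uniqueness then identifies $(w,\pi)$ with $\bigl(M_\eta(\lambda)f,\widetilde N_\eta(\lambda)f\bigr)$ on $B_3$, and both vanish on the complementary pieces, giving $w=0$ on $\mathbb R^3$ and hence $f=0$.
\end{enumerate}

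Once injectivity is established, the Fredholm alternative gives bijectivity of $1+T_\eta(\lambda)$. Formula \eqref{ext-sol} then solves \eqref{Os-resol} by \eqref{ext-parame}, and uniqueness within the class follows from Proposition \ref{ext-uni}. The second half of the statement is identical, using the second halves of Propositions \ref{prop-int} and \ref{ext-uni} together with the extended friction \eqref{new-fric}, which satisfies \eqref{ass-fric3}.
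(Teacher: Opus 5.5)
Your overall plan is the paper's: compactness of $T_\eta(\lambda)$ by Rellich, then $v=\sigma=0$ from Proposition \ref{ext-uni} (your removal of $p_\infty$ via $\sigma\in L^s$ is fine), then identification of the whole-space piece with the interior piece. The gap is in that last step. You correctly flag it as the crux, but as written it does not yield $f=0$.
\begin{itemize}
\item Unique continuation is not available. There is no open set on which $w-M_\eta(\lambda)f$ is known to vanish; in particular, near $\partial B_3$ this difference equals $M_\eta(\lambda)f$, not zero. Moreover, on $\partial B_3$ you only have Navier data, not Cauchy data.
\item Comparing on $\Omega_3\setminus\overline{B_1}$ leaves the trace of $w$ on $\partial B_1$ unknown.
\item ``Whole-space uniqueness'' cannot identify $w$ with $M_\eta(\lambda)f$, because the latter is not a whole-space solution.
\item The gluing goes the other way from what you write. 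It is $(M_\eta(\lambda)f,\widetilde N_\eta(\lambda)f)$, which vanishes on $\Omega_1$, that must be extended by zero into the obstacle, not $w$.
\end{itemize}

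The missing argument is as follows. Take $D=B_3$ and $D_0=A_1$ in Proposition \ref{int-uni}.
\begin{itemize}
\item The zero extension of $(M_\eta(\lambda)f,\widetilde N_\eta(\lambda)f)$ lies in $W^{2,q}(B_3)\times W^{1,q}(B_3)$. It solves \eqref{Os-int} in $B_3$ with the zero-extended forcing $f$ (which vanishes on $B_1$, since $f=-T_\eta(\lambda)f$ is supported in $\overline{A_1}$) and with friction $\alpha_0$ on $\partial B_3$.
\item The pair $(w,\pi)|_{B_3}$ solves the same problem. The Navier condition on $\partial B_3$ holds trivially because $(w,\pi)$ vanishes for $|x|\ge 2$.
\item By \eqref{int-press} together with \eqref{ave-zero}, the two pressures have the same mean over $A_1$.
\item Since $\alpha_0>|\eta|/2+1/3$ and $\kappa=-1/3$ on $\partial B_3$, both \eqref{ass-fric2} and \eqref{ass-fric3} hold on $\partial B_3$. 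Since $\alpha_0>0$, the axisymmetry of $B_3$ is harmless.
\end{itemize}
Hence Proposition \ref{int-uni} gives $(w,\pi)=(M_\eta(\lambda)f,\widetilde N_\eta(\lambda)f)$ on $B_3$.

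You then still have to treat the annulus, because ``both vanish on the complementary pieces'' only covers $|x|\le 1$ and $|x|\ge 2$. On $A_1$ the Bogovskii term in \eqref{parametrix} now vanishes, so $v=w$ and $\sigma=\pi$ there. Then $v=\sigma=0$ forces $(w,\pi)=0$ on $A_1$, hence on all of $\mathbb R^3$, and so $f=0$.

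The pressure normalization is genuinely needed, not cosmetic. Without it you only get $\pi-\widetilde N_\eta(\lambda)f=c$ for some constant $c$, and the same reasoning ends with $f=c\nabla\phi$ instead of $f=0$.
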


\begin{proof}
From \eqref{fmt-p}, \eqref{bdd-wh},
\eqref{high-int} and \eqref{bog-est}
together with \eqref{poin}, the operator $T_\eta(\lambda)$ is bounded from $L^q_{[R]}(\Omega)$ into $W^{1,q}(\Omega)$.
Since $T_\eta(\lambda)f$ is compactly supported, $T_\eta(\lambda)$ is compact from $L^q_{[R]}(\Omega)$ into itself
by the Rellich theorem.

Let $f\in L^q_{[R]}(\Omega)$ satisfy $(1+T_\eta(\lambda))f=0$, from which we see that the support of $f$ is contained in 
$\overline{A_1}$.
By \eqref{bdd-wh}, Proposition \ref{prop-int}
and \eqref{para-sum}
one can apply Proposition \ref{ext-uni} to find that 
$v=R_\eta(\lambda)f=0$ and $\sigma=Q_\eta(\lambda)f=0$. 
In view of \eqref{parametrix}, we infer that
$(E_\eta(\lambda)f,\Pi f)=(0,0)$ for $|x|\geq 2$ and that $(M_\eta(\lambda)f,\widetilde N_\eta(\lambda)f)=(0,0)$ for $|x|\leq 1$.
Hence, both pairs are solutions to the Oseen resolvent system \eqref{Os-int} in $D=B_3$ with $f$ under consideration and with
the friction coefficient $\alpha_0>\frac{|\eta|}{2}+\frac{1}{3}$, see \eqref{new-fric}.
By \eqref{int-press} both pressures fulfill
\[
\int_{A_1}(\Pi f)(x)\,dx=\int_{A_1}(\widetilde N_\eta(\lambda)f)(x)\,dx.
\]
From Proposition \ref{int-uni} it follows that they must coincide with each other.
After all, $(E_\eta(\lambda)f,\Pi f)=(0,0)$ in the whole space $\mathbb R^3$and thereby $f=0$, that is, $1+T_\eta(\lambda)$
is injective.
It is thus bijective by the Fredholm alternative on $L^q_{[R]}(\Omega)$.
In this way, \eqref{ext-sol} is a solution to \eqref{Os-resol}.
The proof is complete.
\end{proof}

We rephrase Theorem \ref{thm-0} as the following proposition,
including the Stokes case $\eta=0$ shown by \cite{SS07},
together with a representation of the resolvent.
\begin{proposition}
Suppose that a constant vector $\eta\in\mathbb R^3$ and a nonnegative function $\alpha\in C(\partial\Omega)$
fulfill the relation \eqref{ass-fric} for every $x\in \partial\Omega$.
Let $q\in (1,\infty)$, then we have
\begin{equation}
\begin{array}{ll}
\overline{\mathbb C_+}\setminus \{0\}\subset \rho(-L) &\mbox{for $\eta\in\mathbb R^3\setminus\{0\}$}, \\
\mathbb C\setminus (-\infty,0]\subset \rho(-A)\qquad&\mbox{for $\eta=0$}.
\end{array}
\label{resol-set}
\end{equation}
If, in particular, $f\in L^q_{[R]}(\Omega)$ with $R\geq 2$, we have a representation of the resolvent
\begin{equation}
(\lambda+L)^{-1}Pf=R_\eta(\lambda)(1+T_\eta(\lambda))^{-1}f
\label{resol-para}
\end{equation}
for every $\lambda$ that belongs to the left-hand side of \eqref{resol-set},
where $L^q_{[R]}(\Omega)$ is given by \eqref{data-sp}.

If the assumption \eqref{ass-fric} is replaced by \eqref{ass-fric0} for $\eta\in \mathbb R^3\setminus \{0\}$,
then we have
\begin{equation}
\mathbb C\setminus S_\eta\subset \rho(-L)
\label{resol-set2}
\end{equation}
with $S_\eta$ given by \eqref{os-spec-0}.
\label{prop-resol}
\end{proposition}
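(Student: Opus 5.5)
Fix $\lambda$ in the left-hand side of \eqref{resol-set} (or in $\mathbb C\setminus S_\eta$ under \eqref{ass-fric0}). We have to show that $\lambda+L$ is bijective from $D(L)$ onto $L^q_\sigma(\Omega)$; boundedness of the inverse then follows from \eqref{os-elli} and the closed graph theorem. The plan has two parts.

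\emph{Injectivity.} Let $u\in D(L)$ with $(\lambda+L)u=0$. Then $\lambda u-\Delta u-\eta\cdot\nabla u=-\nabla p$ for some pressure with $\nabla p\in L^q(\Omega)$. The pair $(u,p)$ lies in $W^{2,q}(\Omega)\times W^{1,q}_{\rm loc}(\overline\Omega)$, satisfies the Navier condition, and has $u\in L^q$, $\nabla p\in L^q$. So \eqref{sum-weak} holds with $r=s=q$, and Proposition \ref{ext-uni} gives $u=0$. The admissible sets of $\lambda$ there contain ours: $\overline{\mathbb C_+}\setminus\{0\}$ lies outside $\widetilde S_\eta$, except for $\lambda=0$, which is excluded. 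For $\eta=0$ use $\mathbb C\setminus(-\infty,0]$. Under \eqref{ass-fric0}, $\mathbb C\setminus S_\eta$ is covered by the second half of Proposition \ref{ext-uni}.

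\emph{Surjectivity.} The cleanest route is a connectedness argument. Let $\Lambda$ be the relevant region: $\overline{\mathbb C_+}\setminus\{0\}$, or $\mathbb C\setminus(-\infty,0]$, or $\mathbb C\setminus S_\eta$. Each is connected and intersects $\Sigma_{\varepsilon,\lambda_0}\subset\rho(-L)$ by \eqref{pre-resol}. The set $\rho(-L)\cap\Lambda$ is relatively open, so it suffices to show it is relatively closed in $\Lambda$.

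For closedness, take $\lambda_k\to\lambda_*\in\Lambda$ with $\lambda_k\in\rho(-L)$. I would argue through the Fredholm property. The operator $\lambda+L$ differs from $\lambda_1+L$ by $(\lambda-\lambda_1)I$, so this alone gives no Fredholmness; a compactness argument is needed. The parametrix \eqref{parametrix} with $1+T_\eta(\lambda)$ compact is exactly the tool for this. A localization argument shows that $\lambda+L$ is Fredholm of index zero on $\Lambda$:
\begin{itemize}
\item near the boundary, use the interior solution operator $M_\eta(\lambda)$ on $\Omega_3$;
\item near infinity, use $E_\eta(\lambda)$ on $L^q(\mathbb R^3)$, valid for $\lambda\notin S_\eta$ by \eqref{fmt};
\item glue the two pieces with a cut-off and a Bogovskii correction, as in \eqref{parametrix}, now for general $f\in L^q(\Omega)$.
\end{itemize}
The result is a right and left parametrix up to compact errors, since the commutator terms are compactly supported and gain a derivative. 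The index is constant on the connected set $\Lambda$ and equals zero on $\Sigma_{\varepsilon,\lambda_0}$. Combined with injectivity, this yields surjectivity at every point of $\Lambda$.

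I expect the main obstacle to be making the parametrix work for $f\in L^q(\Omega)$ not compactly supported. There $E_\eta(\lambda)f$ is globally in $W^{2,q}(\mathbb R^3)$ only for $\lambda\notin S_\eta$, $\lambda\ne0$, which is exactly why $0$ is excluded. The Bogovskii and pressure correction terms, controlled via \eqref{fmt-p}, \eqref{poin} and \eqref{high-int}, then give a compact remainder on all of $L^q(\Omega)$.

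\emph{Representation \eqref{resol-para}.} For $f\in L^q_{[R]}(\Omega)$, Proposition \ref{prop-parame} provides a solution $(u,p)$ given by \eqref{ext-sol} in the class \eqref{sum-weak}. Meanwhile $(\lambda+L)^{-1}Pf$, together with its pressure, solves the same problem with the same right-hand side $f$: the gradient part of $f$ is absorbed into the pressure. Both belong to the uniqueness class of Proposition \ref{ext-uni}, so they coincide. The Stokes case $\eta=0$ is handled identically, or quoted from \cite{SS07}.
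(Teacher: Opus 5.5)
Your injectivity step and your derivation of \eqref{resol-para} match the paper. The routes differ only in surjectivity.

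\textbf{The paper's route.} The paper uses no Fredholm index or connectedness argument. For arbitrary $f\in L^q(\Omega)$ it subtracts the cut-off whole-space solution
$v=(1-\phi)E_\eta(\lambda)f+\mathbb B[(E_\eta(\lambda)f)\cdot\nabla\phi]$, $\sigma=(1-\phi)(\Pi f-\sigma_0)$, where $\sigma_0$ is the mean of $\Pi f$ over $A_1$. The correction $(w,\tau)$ then solves \eqref{Os-resol} with a right-hand side $g$ supported in $\overline{B_2}$, i.e.\ $g\in L^q_{[R]}(\Omega)$, and Proposition \ref{prop-parame} supplies it. Since $\lambda\neq0$, $u=v+w\in D(L)$. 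This reduction to compactly supported data is exactly what disposes of the obstacle you anticipated: no parametrix with compact remainder on all of $L^q(\Omega)$ is ever needed.

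\textbf{Your route.} Your index argument is also sound.
\begin{enumerate}
\item $R_\eta(\lambda)$ maps $L^q_\sigma(\Omega)$ boundedly into $D(L)$, with $(\lambda+L)R_\eta(\lambda)=I+PT_\eta(\lambda)$ and $PT_\eta(\lambda)$ compact. Hence $\lambda+L$ has closed range of finite codimension.
\item Together with injectivity, $\lambda+L$ is Fredholm at every point of $\Lambda$. The left parametrix you mention is unnecessary.
\item The map $\lambda\mapsto\lambda+L\in\mathcal L(D(L),L^q_\sigma(\Omega))$ is norm-continuous, so index zero is transported along the connected set $\Lambda$ from its intersection with $\Sigma_{\varepsilon,\lambda_0}$ via \eqref{pre-resol}.
\end{enumerate}

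\textbf{Comparison.} Your argument buys that the inclusion of $\Lambda$ in the resolvent set no longer rests on the injectivity of $1+T_\eta(\lambda)$; the cost is relying on \eqref{pre-resol} and index theory. But you invoke Proposition \ref{prop-parame} anyway for \eqref{resol-para}, so the paper's direct construction is the more economical choice.

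\textbf{A simplification in your last step.} For $\lambda\neq0$ the function $R_\eta(\lambda)(1+T_\eta(\lambda))^{-1}f$ already lies in $W^{2,q}(\Omega)$ with the Navier condition, hence in $D(L)$. It therefore equals $(\lambda+L)^{-1}Pf$ by injectivity alone, with no separate appeal to the uniqueness class of Proposition \ref{ext-uni}.
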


\begin{proof}
It suffices to show \eqref{resol-set}, which at once yields \eqref{resol-para} for $f\in L^q_{[R]}(\Omega)$
by Proposition \ref{prop-parame}.
Let us take $\lambda$ from the left-hand side of \eqref{resol-set}.
We first verify that $\lambda+L$ is injective.
Let $u\in D(L)$ satisfy $(\lambda+L)u=0$ in $L^q_\sigma(\Omega)$, then one can take the associated pressure $p$
with $\nabla p\in L^q(\Omega)$ such that $(u,p)$ is a solution to \eqref{Os-resol} with $f=0$
and, therefore, $u=0$ by Proposition \ref{ext-uni}.

Given $f\in L^q(\Omega)$, we next construct a solution of \eqref{Os-resol}.
Set
\[
v=(1-\phi)E_\eta(\lambda)f+\mathbb B[(E_\eta(\lambda)f)\cdot\nabla\phi], \qquad
\sigma=(1-\phi)(\Pi f-\sigma_0)
\]
with $\sigma_0=|A_1|^{-1}\int_{A_1}\Pi f\,dx$, where
$(E_\eta(\lambda),\Pi)$ is the solution operator \eqref{formula}--\eqref{formula-p} in the whole space $\mathbb R^3$,
$f$ is understood as its zero extension to $\mathbb R^3$, $\phi$ is the same cut-off function as in \eqref{parametrix}, and
$\mathbb B$ is the Bogovskii operator in $A_1=\{1<|x|<2\}$.
We look for a solution to \eqref{Os-resol} of the form $u=v+w$ and $p=\sigma+\tau$.
Then $(w,\tau)$ obeys \eqref{Os-resol} with $f$ replaced by
\begin{equation*}
\begin{split}
g&=\phi f-2\nabla\phi\cdot\nabla E_\eta(\lambda)f-(\Delta \phi+\eta\cdot\nabla\phi)E_\eta(\lambda)f
+\Delta\mathbb B[(E_\eta(\lambda)f)\cdot\nabla\phi]  \\
&\quad -\lambda\mathbb B[(E_\eta(\lambda)f)\cdot\nabla\phi]
+\eta\cdot\nabla\mathbb B[(E_\eta(\lambda)f)\cdot\nabla\phi]+(\nabla\phi)(\Pi f-\sigma_0)
\end{split}
\end{equation*}
that belongs to $L^q_{[R]}(\Omega)$ with $R\geq 2$.
Thus Proposition \ref{prop-parame} provides a solution $(w,\tau)$ given by \eqref{ext-sol} with $f$ replaced by $g$.
By \eqref{fmt}
together with Proposition \ref{prop-int}
we see that $u=v+w\in D(L)$ (the case $\lambda=0$ is excluded here) together with $(\lambda+L)u=Pf$.
Hence, $\lambda+L$ is surjective and thereby invertible since $L$ is closed.

Finally, it is obvious that \eqref{resol-set2} follows from the latter half of Proposition \ref{prop-parame} in the argument above.
The proof is complete.
\end{proof}

In order to deduce the large time decay of the Oseen semigroup $e^{-tL}$, what is crucial is to investigate the behavior
of the resolvent \eqref{resol-para} near $\lambda=0$.
To this end, it suffices to consider $\lambda$ in the right-half plane including the imaginary axis.
We begin with the following lemma.
\begin{lemma}
Under the same assumptions of the first half of Proposition \ref{prop-parame},
let $q\in (1,\infty)$ and $R\geq 2$.
Given $m>0$ and compact set $K\subset \overline{\mathbb C_+}$,
there is a constant 
$c_*=c_*(m,K,\alpha,q,R,\Omega)>0$ such that
\begin{equation}
\big\|(1+T_\eta(\lambda))^{-1}\big\|_{{\mathcal L}(L^q_{[R]}(\Omega))}\leq c_*
\label{remain-unif}
\end{equation}
for all $(\lambda,\eta) \in K\times \overline{B_m}$, where $L^q_{[R]}(\Omega)$ is given by \eqref{data-sp}.
\label{lem-unif}
\end{lemma}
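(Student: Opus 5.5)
The plan is a compactness/continuity argument: show that $(\lambda,\eta)\mapsto T_\eta(\lambda)$ is continuous from $K\times\overline{B_m}$ into ${\mathcal L}(L^q_{[R]}(\Omega))$ in operator norm, and then use the fact that $(1+T_\eta(\lambda))^{-1}$ exists for every such pair by Proposition \ref{prop-parame}. Continuity of the inverse then follows from a Neumann series argument exactly as in \eqref{int-neu}--\eqref{conti-inv}, and a continuous function on a compact set is bounded, which gives \eqref{remain-unif}.

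\emph{Step 1: continuity of $T_\eta(\lambda)$.} Each term in \eqref{remain} involves $E_\eta(\lambda)f-M_\eta(\lambda)f$ (or $\Pi f-\widetilde N_\eta(\lambda)f$) multiplied by derivatives of $\phi$, possibly composed with $\mathbb B$. All of these are supported in $\overline{A_1}\subset B_3\cap\Omega_3$.

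For the whole-space part, \eqref{wh-conti} gives, for $f\in L^q_{[R]}$,
\[
\|E_\eta(\lambda)f-E_{\eta'}(\lambda')f\|_{W^{2,q}(B_3)}\le C(|\lambda-\lambda'|+|\eta-\eta'|)^\theta\|f\|_q .
\]
The pressure $\Pi f$ does not depend on $(\lambda,\eta)$ at all. For the interior part, \eqref{conti-int} provides Lipschitz continuity of $M_\eta(\lambda)$ in $W^{2,q}(\Omega_3)$ and of $N_\eta(\lambda)$ in $W^{1,q}(\Omega_3)$, uniformly on $K\times\overline{B_m}$. The Poincaré inequality \eqref{poin} handles the pressure difference, since the constant in \eqref{int-press} is independent of $(\lambda,\eta)$. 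The Bogovskii bound \eqref{bog-est} with $k=1$ controls $\Delta\mathbb B[\cdot]$ in $L^q$ by the $W^{1,q}$ norm of $(E-M)\cdot\nabla\phi$. The explicit factors $\lambda$ and $\eta$ are bounded on the compact set, and their variation is controlled by the uniform bounds \eqref{wh-1} and \eqref{high-int} with $j=0$. Altogether this gives
\[
\|T_\eta(\lambda)-T_{\eta'}(\lambda')\|_{{\mathcal L}(L^q_{[R]}(\Omega))}\le C(|\lambda-\lambda'|+|\eta-\eta'|)^\theta .
\]

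\emph{Step 2: conclusion.} Fix $(\lambda_1,\eta_1)\in K\times\overline{B_m}$, and let $c_1=\|(1+T_{\eta_1}(\lambda_1))^{-1}\|$, which is finite by Proposition \ref{prop-parame} together with the bounded inverse theorem. If $\|T_\eta(\lambda)-T_{\eta_1}(\lambda_1)\|\le 1/(2c_1)$, the Neumann series shows that $\|(1+T_\eta(\lambda))^{-1}\|\le 2c_1$. By Step 1 this holds on a neighbourhood of $(\lambda_1,\eta_1)$. Covering the compact set $K\times\overline{B_m}$ by finitely many such neighbourhoods yields $c_*$.

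The main point to check carefully is Step 1, specifically that the interior operators $M_\eta(\lambda)$, $N_\eta(\lambda)$ (built on $\Omega_3$ with the extended friction \eqref{new-fric}) satisfy the hypotheses of Proposition \ref{prop-int} uniformly for $|\eta|\le m$. This requires choosing $\alpha_0>m/2+1/3$ once and for all rather than depending on $\eta$. With $\alpha_0$ fixed this way, \eqref{ass-fric2} holds on $\partial\Omega_3$ for all $|\eta|\le m$, so \eqref{high-int} and \eqref{conti-int} apply with constants uniform on $K\times\overline{B_m}$. The case $\eta=0$ is included, since Proposition \ref{prop-int} covers $\lambda\in\overline{\mathbb C_+}$ there as well.
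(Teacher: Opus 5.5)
Your proposal is correct and follows the paper's proof: the paper also deduces operator-norm continuity of $(\lambda,\eta)\mapsto T_\eta(\lambda)$ on $L^q_{[R]}(\Omega)$ from \eqref{wh-conti}, \eqref{conti-int} and \eqref{bog-est}, gets continuity of the inverse by the Neumann-series argument of Proposition \ref{prop-int}, and concludes by compactness. Your remark that $\alpha_0$ in \eqref{new-fric} must be fixed once as $\alpha_0>m/2+1/3$, independently of $\eta$, is a genuine clarification the paper leaves implicit; likewise, on $\partial\Omega$ itself \eqref{ass-fric} need not hold for every $|\eta|\le m$, so $\eta$ should really range over the set of $\eta\in\overline{B_m}$ satisfying \eqref{ass-fric}, which is still compact, so the argument is unaffected.
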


\begin{proof}
In view of \eqref{remain}, we find from 
\eqref{wh-conti}, \eqref{conti-int} and \eqref{bog-est} that
$(\lambda,\eta)\mapsto T_\eta(\lambda)\in {\mathcal L}(L^q_{[R]}(\Omega))$ is continuous on 
$\overline{\mathbb C_+}\times \mathbb R^3$; hence, so is
$(\lambda,\eta)\mapsto (1+T_\eta(\lambda))^{-1}\in {\mathcal L}(L^q_{[R]}(\Omega))$ by the argument using the Neumann series 
as in the proof of Proposition \ref{prop-int}.
As a consequence,
\eqref{remain-unif} holds true. 
The proof is complete.
\end{proof}

The next proposition tells us the regularity of the resolvent along the imaginary axis with respect to the topology
${\mathcal L}(L^q_{[R]}(\Omega),\,W^{2,q}(\Omega_3))$ and plays a key role.
\begin{proposition}
Under the same assumptions of the first half of 
Proposition \ref{prop-parame}, let $q\in (1,\infty)$ and $R\geq 2$.
We set
\begin{equation}
V(\tau): f\mapsto
\partial_\tau(i\tau+L)^{-1}Pf,
\qquad \tau\in\mathbb R\setminus\{0\}. 
\label{on-ima}
\end{equation}
For every $m>0$, 
there is a constant $C=C(m,\alpha,q,R,\Omega)>0$ such that
\begin{equation}
\int_{-4}^4
\|V(\tau)\|_{{\mathcal L}(L^q_{[R]}(\Omega),W^{2,q}(\Omega_3))}
\,d\tau\leq C 
\label{key-1}
\end{equation}
\begin{equation}
\int_{-2}^2
\|V(\tau+h)-V(\tau)\|_{{\mathcal L}(L^q_{[R]}(\Omega),W^{2,q}(\Omega_3))}
\,d\tau\leq C|h|^{1/2} 
\label{key-2}
\end{equation}
for all $\eta\in\mathbb R^3$ with $|\eta|\leq m$  
and $h\in\mathbb R$ with $|h|\leq 1$,
where $L^q_{[R]}(\Omega)$ is given by \eqref{data-sp}.
\label{ext-key}
\end{proposition}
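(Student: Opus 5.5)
We work with the representation \eqref{resol-para}: for $f\in L^q_{[R]}(\Omega)$ and $\tau\neq 0$,
\[
(i\tau+L)^{-1}Pf=R_\eta(i\tau)\,(1+T_\eta(i\tau))^{-1}f .
\]
Differentiating in $\tau$ and writing $U(\tau)=(1+T_\eta(i\tau))^{-1}$, we get
\[
V(\tau)=\partial_\tau R_\eta(i\tau)\,U(\tau)-R_\eta(i\tau)\,U(\tau)\,\partial_\tau T_\eta(i\tau)\,U(\tau).
\]
The proof then reduces to three ingredients: integrability and H\"older-type bounds in $\tau$ for $\partial_\tau R_\eta(i\tau)$ and $\partial_\tau T_\eta(i\tau)$; uniform boundedness of $R_\eta(i\tau)$ and $U(\tau)$; and $|h|^{1/2}$-H\"older continuity of $R_\eta(i\tau)$ and $U(\tau)$. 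Since $K=\{i\tau;\,|\tau|\leq 5\}$ is compact in $\overline{\mathbb C_+}$, Lemma \ref{lem-unif} gives $\sup_{|\tau|\leq 5,\,|\eta|\leq m}\|U(\tau)\|\leq c_*$.

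\emph{Step 1: structure of $R_\eta$ and $T_\eta$.}
By \eqref{parametrix} and \eqref{remain}, each of $R_\eta(i\tau)$ (restricted to $\Omega_3$) and $T_\eta(i\tau)$ has the form
\[
\mathcal{A}\,E_\eta(i\tau)+\mathcal{B}(\tau)\,M_\eta(i\tau)+(\text{terms from }\Pi,\ N_\eta).
\]
Here $\mathcal A$ stands for fixed operators: multiplication by $\phi$ or its derivatives, differentiation, and the Bogovskii operator $\mathbb B$, which is bounded by \eqref{bog-est}. The factor $\mathcal B(\tau)$ may be affine in $\lambda=i\tau$, through the terms $\lambda\mathbb B[\cdot]$. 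Every term only needs $E_\eta f$ on $B_3$, measured in $W^{2,q}(B_3)$ (or in $W^{1,q}$ for $T_\eta$), and $\Pi f$ does not depend on $\tau$. Then:
\begin{itemize}
\item \eqref{wh-1} and \eqref{high-int} give $\sup_{|\tau|\leq 5}\|R_\eta(i\tau)\|_{{\mathcal L}(L^q_{[R]},W^{2,q}(\Omega_3))}+\|T_\eta(i\tau)\|\leq C$.
\item \eqref{wh-2} and \eqref{high-int} with $j=1$ give $\int_{-4}^4\big(\|\partial_\tau R_\eta(i\tau)\|+\|\partial_\tau T_\eta(i\tau)\|\big)d\tau\leq C$.
\item \eqref{wh-3} and \eqref{conti-int} give $\|R_\eta(i(\tau+h))-R_\eta(i\tau)\|+\|T_\eta(i(\tau+h))-T_\eta(i\tau)\|\leq C|h|^{1/2}$.
\item \eqref{wh-4} together with \eqref{high-int} ($j=2$, which gives Lipschitz continuity of $\partial_\lambda M_\eta$) gives
\[
\int_{-2}^2\big(\|\partial_\tau R_\eta(i(\tau+h))-\partial_\tau R_\eta(i\tau)\|+\|\partial_\tau T_\eta(i(\tau+h))-\partial_\tau T_\eta(i\tau)\|\big)d\tau\leq C|h|^{1/2}.
\]
\end{itemize}
For $T_\eta$ these estimates are needed in ${\mathcal L}(L^q_{[R]}(\Omega))$; they follow because $T_\eta f$ is supported in $\overline{A_1}$ and involves at most second derivatives of $E_\eta f$ and $M_\eta f$ in $L^q(A_1)$.

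\emph{Step 2: H\"older continuity of $U$.}
From the resolvent identity
\[
U(\tau+h)-U(\tau)=-U(\tau+h)\big(T_\eta(i(\tau+h))-T_\eta(i\tau)\big)U(\tau)
\]
and Step 1, we get $\|U(\tau+h)-U(\tau)\|\leq c_*^2C|h|^{1/2}$ uniformly for $|\eta|\leq m$.

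\emph{Step 3: conclusion.}
Estimate \eqref{key-1} follows at once from the formula for $V$, the uniform bounds, and the integrability bounds of Step 1. For \eqref{key-2}, we expand $V(\tau+h)-V(\tau)$ by adding and subtracting one factor at a time. Each resulting term contains one of two things:
\begin{itemize}
\item a difference of derivatives, integrated against uniformly bounded factors, which is $O(|h|^{1/2})$ by the last bullet of Step 1;
\item a difference of $R_\eta$, $U$, or $T_\eta$, bounded in sup-norm by $C|h|^{1/2}$ (Steps 1–2), multiplied by an integrable derivative factor.
\end{itemize}
In both cases the integral over $[-2,2]$ is $O(|h|^{1/2})$. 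Shifted arguments $\tau+h$ stay in $[-3,3]$, where all the bounds above hold.

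The main obstacle is the bookkeeping around $\tau=0$. There $\partial_\tau E_\eta(i\tau)$ is singular, and we only control it through the integrated estimates \eqref{wh-2} and \eqref{wh-4}, not pointwise. So we must check that every term of $V$ contains at most one $\tau$-derivative factor, that this factor always carries the derivative on $E_\eta$ (never a product of two singular factors), and that all the other factors are uniformly bounded. The formula for $V$ above satisfies this, since $U$ itself is never differentiated except through $\partial_\tau T_\eta$. A second point to check is that Lemma \ref{lem-unif} applies at $\lambda=0$. It does, because Proposition \ref{prop-parame} includes $\lambda=0$, so $U$ is bounded and continuous across $\tau=0$ even though $V$ is only defined for $\tau\neq0$.
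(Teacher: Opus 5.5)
Your proposal is correct and follows essentially the same route as the paper. Both arguments split $V$ via the product rule applied to \eqref{resol-para}, handle $(1+T_\eta(i\tau))^{-1}$ through the resolvent identity \eqref{rep-inv} together with \eqref{remain-unif}, and combine \eqref{wh-1}--\eqref{wh-4} with \eqref{high-int} for $0\le j\le 2$, adding and subtracting one factor at a time so that each term carries either an integrated difference of $\tau$-derivatives or a sup-norm $|h|^{1/2}$-difference multiplied by an integrable derivative factor.
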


\begin{proof}
In what follows 
we make use of \eqref{remain-unif} with
$K=\{\lambda\in\overline{\mathbb C_+};\; |\lambda|\leq 4\}$. 
The proof is based on the structure of the parametrix of the resolvent, see \eqref{parametrix}, \eqref{remain} and \eqref{resol-para}. 
Let us split 
$V(\tau)$ into
\begin{equation*}
V(\tau)=V_1(\tau)+V_2(\tau)
\end{equation*}
with
\begin{equation}
\begin{split}
&
V_1(\tau)
=[\partial_\tau R_\eta(i\tau)](1+T_\eta(i\tau))^{-1}, \\
&
V_2(\tau)
=-R_\eta(i\tau)(1+T_\eta(i\tau))^{-1}[\partial_\tau T_\eta(i\tau)](1+T_\eta(i\tau))^{-1}.
\end{split}
\label{rep-1st}
\end{equation}
For simplicity of notation, we set
\[
F(\tau)=\partial_\tau R_\eta(i\tau), \qquad
G(\tau)=(1+T_\eta(i\tau))^{-1}, \qquad
H(\tau)=\partial_\tau T_\eta(i\tau). 
\]
The obvious equality
\begin{equation}
G(\tau+h)-G(\tau)
=-G(\tau+h)
\big[T_\eta(i(\tau+h))-T_\eta(i\tau)\big]
G(\tau)
\label{rep-inv}
\end{equation}
is useful in the computations below.

We now collect \eqref{wh-1}--\eqref{wh-4}, \eqref{high-int} with $0\leq j\leq 2$, \eqref{bog-est}, \eqref{int-press},
\eqref{remain-unif} and \eqref{rep-inv} to find that
\begin{equation}
\sup_{|\tau|\leq 4}\|
R_\eta(i\tau)
\|_{{\mathcal L}(L^q_{[R]}(\Omega),W^{2,q}(\Omega_3))}\leq C,
\label{ext-reg1}
\end{equation}
\begin{equation}
\int_{-4}^4\left(
\|F(\tau)\|_{{\mathcal L}(L^q_{[R]}(\Omega),W^{2,q}(\Omega_3))}+
\|H(\tau)\|_{{\mathcal L}(L^q_{[R]}(\Omega))}
\right)\,d\tau\leq C,
\label{ext-reg2}
\end{equation}
yielding \eqref{key-1}, and that
\begin{equation}
\sup_{|\tau|\leq 4}\left(
\|
R_\eta(i(\tau+h))-R_\eta(i\tau)
\|_{{\mathcal L}(L^q_{[R]}(\Omega),W^{2,q}(\Omega_3))}+
\|G(\tau+h)-G(\tau)\|_{{\mathcal L}(L^q_{[R]}(\Omega))}
\right)
\leq C|h|^{1/2},
\label{ext-reg3}
\end{equation}
\begin{equation}
\int_{-2}^2\left(
\|F(\tau+h)-F(\tau)\|_{{\mathcal L}(L^q_{[R]}(\Omega),W^{2,q}(\Omega_3))}+
\|H(\tau+h)-H(\tau)\|_{{\mathcal L}(L^q_{[R]}(\Omega))}
\right)\,d\tau
\leq C|h|^{1/2}.
\label{ext-reg4}
\end{equation}
In view of \eqref{rep-1st},
we employ \eqref{ext-reg1}--\eqref{ext-reg4} to deduce
\begin{equation*}
\begin{split}
&\quad\int_{-2}^2
\|V_1(\tau+h)-V_1(\tau)\|_{{\mathcal L}(L^q_{[R]}(\Omega),W^{2,q}(\Omega_3))}
\,d\tau  \\
&\leq c_* 
\int_{-2}^2\|F(\tau+h)-F(\tau)\|_{{\mathcal L}(L^q_{[R]}(\Omega), W^{2,q}(\Omega_3))}\,d\tau  \\
&\quad +\sup_{|\tau|\leq 2}\|G(\tau+h)-G(\tau)\|_{{\mathcal L}(L^q_{[R]}(\Omega))} 
\int_{-2}^2\|F(\tau)\|_{{\mathcal L}(L^q_{[R]}(\Omega_3), W^{2,q}(\Omega_3))}\,d\tau  \\
&\leq C|h|^{1/2} 
\end{split}
\end{equation*}
as well as
\begin{equation*}
\begin{split}
&\quad\int_{-2}^2
\|V_2(\tau+h)-V_2(\tau)\|_{{\mathcal L}(L^q_{[R]}(\Omega),W^{2,q}(\Omega_3))}
\,d\tau  \\
&\leq c_*^2\sup_{|\tau|\leq 2}\|
R_\eta(i(\tau+h))-R_\eta(i\tau)
\|_{{\mathcal L}(L^q_{[R]}(\Omega),W^{2,q}(\Omega_3))} 
\int_{-2}^2 \|H(\tau+h)\|_{{\mathcal L}(L^q_{[R]}(\Omega))}\,d\tau  \\
&\quad +c_*\sup_{|\tau|\leq 2}\|
R_\eta(i\tau)
\|_{{\mathcal L}(L^q_{[R]}(\Omega),W^{2,q}(\Omega_3))} 
\left[c_*\int_{-2}^2\|H(\tau+h)-H(\tau)\|_{{\mathcal L}(L^q_{[R]}(\Omega))}\,d\tau \right. \\
&\left. 
\quad +\sup_{|\tau|\leq 2}\|G(\tau+h)-G(\tau)\|_{{\mathcal L}(L^q_{[R]}(\Omega))}
\int_{-2}^2 \Big(\|H(\tau+h)\|_{{\mathcal L}(L^q_{[R]}(\Omega))}
+\|H(\tau)\|_{{\mathcal L}(L^q_{[R]}(\Omega))}\Big)\,d\tau \right]  \\
&\leq C|h|^{1/2} 
\end{split}
\end{equation*}
which completes the proof.
\end{proof}

\section{Proof of Theorem \ref{thm}}
\label{proof}

In this section we prove Theorem \ref{thm}.
To this end, the following elementary lemma is useful.
This provides us with the relation between the regularity of functions and the rate of decay of those (inverse) Fourier transform.
\begin{lemma}
[{\cite[Lemma 7.3]{H16}}]
Let $X$ be a Banach space with norm $\|\cdot\|$ and $w\in L^1(\mathbb R;X)$.
Then
the function
\begin{equation}
W(t)
=\int_{-\infty}^\infty e^{i\tau t}w(\tau)\,d\tau
\label{fourier}
\end{equation}
enjoys
\begin{equation}
\|
W(t)
\|\leq C\int_{-\infty}^\infty \left\|w\left(\tau+\frac{1}{t}\right)-w(\tau)\right\|\,d\tau
\label{est-four}
\end{equation}
for all $t\in\mathbb R\setminus \{0\}$.
\label{lem-fourier}
\end{lemma}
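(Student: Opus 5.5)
The plan is to prove \eqref{est-four} by a symmetrization trick. The estimate rests on the fact that a shift of $\tau$ by $1/t$ changes $e^{i\tau t}$ by the factor $e^{i}$.

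Fix $t\neq 0$ and set $h=1/t$. Since $w\in L^1(\mathbb R;X)$, the Bochner integral \eqref{fourier} is well defined. The change of variables $\tau\mapsto\tau+h$ is legitimate by translation invariance of Lebesgue measure, and it gives
\[
W(t)=\int_{-\infty}^\infty e^{i(\tau+h)t}w(\tau+h)\,d\tau
=e^{i}\int_{-\infty}^\infty e^{i\tau t}w(\tau+h)\,d\tau .
\]

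Subtracting $e^{i}W(t)=e^{i}\int e^{i\tau t}w(\tau)\,d\tau$ from this expression yields
\[
(1-e^{i})W(t)=-e^{i}\int_{-\infty}^\infty e^{i\tau t}\big(w(\tau+h)-w(\tau)\big)\,d\tau .
\]
Taking norms and using $|e^{i\tau t}|=1$ gives
\[
|1-e^{i}|\,\|W(t)\|\le\int_{-\infty}^\infty\|w(\tau+1/t)-w(\tau)\|\,d\tau .
\]
Since $|1-e^{i}|=2\sin(1/2)>0$, the claim follows with $C=1/(2\sin\frac12)$, and this $C$ is independent of $t$, $X$ and $w$.

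Two points need care, though neither is a serious obstacle. The translated function $w(\cdot+h)$ still lies in $L^1(\mathbb R;X)$, so every integral above converges absolutely. The norm estimate for Bochner integrals, $\|\int g\|\le\int\|g\|$, is standard. Alternatively one could average the two representations, $W(t)=\tfrac12\int e^{i\tau t}\big(w(\tau)-w(\tau+\pi/t)\big)\,d\tau$ using the shift $\pi/t$, but the shift $1/t$ is what is required for \eqref{est-four}, and the computation above handles it directly.
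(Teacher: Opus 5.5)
Your argument is correct and is the standard translation trick behind this lemma: the paper gives no proof of its own and cites Lemma 7.3 of \cite{H16}, and the shift $1/t$ there is designed for exactly this $|1-e^{i}|=2\sin\frac12$ argument, since it lets \eqref{key-2} with $|h|\le 1$ be applied for $t\ge 1$. One small slip: subtracting $e^{i}W(t)$ gives $(1-e^{i})W(t)=+e^{i}\int_{-\infty}^{\infty}e^{i\tau t}\big(w(\tau+1/t)-w(\tau)\big)\,d\tau$, not $-e^{i}\int\cdots$, which is harmless once you take norms.
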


The idea of the proof of Lemma \ref{lem-fourier} is found first in \cite{H04}, however, the origin of this lemma
goes back to Shibata \cite{S83}.
Estimate of the form \eqref{est-four} would be easier to apply to various situations.

We observe
that \eqref{key-1} enables us to justify the formula 
\begin{equation}
e^{-tL}Pf=\frac{-1}{2\pi it}\int_{-\infty}^\infty e^{i\tau t}\partial_\tau (i\tau +L)^{-1}Pf\,d\tau
\label{repre}
\end{equation}
in 
$W^{1,q}(\Omega_3)$ as long as $f\in L^q_{[R]}(\Omega)$ when we start from the Dunford integral representation, 
in which the path of integration may hit the origin on account of \eqref{wh-1} and \eqref{high-int}
together with \eqref{remain-unif},
perform 
integration by parts and then move the path 
to the imaginary axis.

Let us close the paper with 
the following proposition on the 
local energy decay estimate.
In fact, once we have that,
then we arrive at Theorem \ref{thm}
along the same way as in the existing literature, see
\cite{I89, KS98, ES05, H04, H16, H20, H21,HS09,Ma21,S08,SY08}, 
by means of 
well-established cut-off procedure which consists in two steps:
(i) estimate still in $\Omega_3$ for general $f\in L^q_\sigma(\Omega)$; 
(ii) estimate in $\mathbb R^3\setminus\Omega_3$.
Since the procedure would be rather standard nowadays,
one may omit the proof of Theorem \ref{thm}.
For the Stokes semigroup ($\eta=0$), the following proposition was already proved 
by Shimada and Yamaguchi \cite[Theorem 1.5]{SY08}.
\begin{proposition}
Suppose that a constant vector $\eta\in\mathbb R^3$ and a nonnegative function
$\alpha\in C^1(\partial\Omega)$ fulfill the relation \eqref{ass-fric} for every $x\in\partial\Omega$.
Let $q\in (1,\infty)$ and $R\geq 2$.
For every $m>0$, there is a constant $C=C(m,\alpha,q,R,\Omega)>0$ such that
\begin{equation}
\|e^{-tL}Pf\|_{W^{2,q}(\Omega_3)}\leq Ct^{-3/2}\|f\|_q
\label{led}
\end{equation}
for all $t\geq 1$, $\eta\in\mathbb R^3$ with $|\eta|\leq m$ and $f\in L^q_{[R]}(\Omega)$ which is given by \eqref{data-sp}.
\label{prop-led}
\end{proposition}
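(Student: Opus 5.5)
The local energy decay \eqref{led} will follow from the representation \eqref{repre}, combined with Lemma \ref{lem-fourier} and the regularity of the resolvent along the imaginary axis stated in Proposition \ref{ext-key}. I fix $m>0$, $|\eta|\le m$, $f\in L^q_{[R]}(\Omega)$ and $t\geq 1$. The integrand in \eqref{repre} is $V(\tau)f$, and I split it with a smooth cut-off $\psi\in C_0^\infty((-3,3);[0,1])$ satisfying $\psi=1$ on $[-2,2]$. This gives
\[
e^{-tL}Pf=\frac{-1}{2\pi it}\int_{-\infty}^\infty e^{i\tau t}\psi(\tau)V(\tau)f\,d\tau
+\frac{-1}{2\pi it}\int_{-\infty}^\infty e^{i\tau t}(1-\psi(\tau))V(\tau)f\,d\tau .
\]
The low-frequency piece carries the decay. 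The high-frequency piece sits away from the origin, where the resolvent is smooth.

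\emph{High frequencies.} On $|\tau|\ge 2$ I use the analyticity of $(\lambda+L)^{-1}$ on $\overline{\mathbb C_+}\setminus\{0\}$, which is Proposition \ref{prop-resol}, together with the sectorial bound valid for large $|\lambda|$. From these, $\|\partial_\tau^k(i\tau+L)^{-1}P\|_{{\mathcal L}(L^q,D(L))}\le C|\tau|^{-k}$ for $|\tau|\ge 2$, and \eqref{os-elli} converts the $D(L)$ norm into the $W^{2,q}(\Omega_3)$ norm. One more integration by parts in $\tau$ then yields a factor $t^{-2}$. To be safe I will shift this part of the contour slightly into the left half plane, inside the sector $\Sigma_{\varepsilon,\lambda_0}$ for large $|\tau|$, before integrating. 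The result is a bound $Ct^{-N}\|f\|_q$ for the high-frequency piece.

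\emph{Low frequencies.} I apply Lemma \ref{lem-fourier} with $X={\mathcal L}(L^q_{[R]}(\Omega),W^{2,q}(\Omega_3))$ and $w=\psi V$, which gives
\[
\Big\|\int e^{i\tau t}\psi V\,d\tau\Big\|\le C\int_{-\infty}^\infty\|(\psi V)(\tau+1/t)-(\psi V)(\tau)\|\,d\tau .
\]
I then write $(\psi V)(\tau+h)-(\psi V)(\tau)=\psi(\tau+h)\big(V(\tau+h)-V(\tau)\big)+\big(\psi(\tau+h)-\psi(\tau)\big)V(\tau)$ with $h=1/t\le1$. The first term contributes $C|h|^{1/2}$ by \eqref{key-2}, after enlarging the interval in \eqref{key-2} from $[-2,2]$ to $[-3,3]$. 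On $2\le|\tau|\le 4$ this enlargement is harmless, since $V$ is smooth there. The second term is bounded by $C|h|\int_{-4}^4\|V\|\,d\tau\le C|h|$ by \eqref{key-1}. Together with the prefactor $1/t$, the low-frequency piece is $O(t^{-1}\cdot t^{-1/2})=O(t^{-3/2})$. All constants are uniform in $|\eta|\le m$ because \eqref{key-1}--\eqref{key-2} are.

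The main obstacle is justifying \eqref{repre} itself, in the $W^{2,q}(\Omega_3)$ topology rather than $W^{1,q}(\Omega_3)$. This requires moving the Dunford contour onto the imaginary axis through the origin. At the origin, $(i\tau+L)^{-1}Pf$ restricted to $\Omega_3$ stays bounded by \eqref{ext-reg1}, and $V$ is integrable by \eqref{key-1}, so the boundary terms from the integration by parts vanish and the limit exists. I expect this part to need care but no new ideas. I will do the integration by parts for $\mathrm{Re}\,\lambda=\varepsilon>0$ first, then let $\varepsilon\to0$ using the continuity in $\lambda$ furnished by \eqref{wh-conti}, \eqref{conti-int} and Lemma \ref{lem-unif}.
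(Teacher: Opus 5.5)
Your proposal is correct and follows the paper's proof essentially step for step. You split \eqref{repre} with a cut-off, handle high frequencies by one more integration by parts using $\|(i\tau+L)^{-3}P\|\le C|\tau|^{-2}$, and apply Lemma \ref{lem-fourier} to the low-frequency part, where \eqref{key-1} and \eqref{key-2} yield $t^{-1}$ and $t^{-1/2}$; the prefactor $1/t$ then gives $t^{-3/2}$. The differences are cosmetic: your wider cut-off forces the (harmless) extension of \eqref{key-2} beyond $[-2,2]$, which the paper avoids by taking $\psi=0$ for $|\tau|\ge 2$. Your contour-shifting aside for high frequencies is unnecessary, and not literally possible once the non-analytic factor $1-\psi$ is inserted.
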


\begin{proof}
We take a cut-off function $\psi\in C^\infty(\mathbb R;[0,1])$ such that $\psi(\tau)=1$ for $|\tau|\leq 1$ and
$\psi(\tau)=0$ for $|\tau|\geq 2$.
Let us divide the integral \eqref{repre} into
\begin{equation}
\frac{-1}{2\pi it}\int_{-\infty}^\infty e^{i\tau t}\psi(\tau)\partial_\tau (i\tau +L)^{-1}Pf\,d\tau
\label{repre-main}
\end{equation}
and the other part which decays like $t^{-2}$ by integration by parts once more since
\[
\|\partial_\tau^2(i\tau +L)^{-1}Pf\|_{W^{2,q}(\Omega)}
=2\|(i\tau+L)^{-3}Pf\|_{W^{2,q}(\Omega)}\leq C|\tau|^{-2}\|f\|_q
\]
for $|\tau|\geq 1$ and $f\in L^q(\Omega)$.
It thus suffices to deduce the decay rate $t^{-1/2}$ of 
$W(t)$
of the form \eqref{fourier} with
\[
w(\tau)=\psi(\tau)v(\tau), \qquad v(\tau)=\partial_\tau (i\tau+L)^{-1}Pf.
\]
Let $f\in L^q_{[R]}(\Omega)$.
From \eqref{est-four} in Lemma \ref{lem-fourier} with $X=W^{2,q}(\Omega_3)$ we know
\begin{equation*}
\begin{split}
\|
W(t)
\|_{W^{2,q}(\Omega_3)}
&\leq C\int_{-\infty}^\infty \left|\psi\left(\tau+\frac{1}{t}\right)-\psi(\tau)\right|
\left\|v\left(\tau+\frac{1}{t}\right)\right\|_{W^{2,q}(\Omega_3)}\,d\tau  \\
&\qquad +C\int_{-\infty}^\infty |\psi(\tau)|\left\|v\left(\tau+\frac{1}{t}\right)-v(\tau)\right\|_{W^{2,q}(\Omega_3)}\,d\tau
=:I+J
\end{split}
\end{equation*}
for $t\in\mathbb R\setminus \{0\}$.
If, in particular, $t\geq 1$, then we readily see from \eqref{key-1} and
$|\psi(\tau+1/t)-\psi(\tau)|\leq C/t$
that
\[
I\leq \frac{C}{t}\int_{|\tau|\leq 3}\left\|v\left(\tau+\frac{1}{t}\right)\right\|_{W^{2,q}(\Omega_3)}\,d\tau
\leq Ct^{-1}\|f\|_q.
\]
In addition, it follows from \eqref{key-2} that
\[
J\leq C\int_{|\tau|\leq 2}\left\|v\left(\tau+\frac{1}{t}\right)-v(\tau)\right\|_{W^{2,q}(\Omega_3)}\,d\tau\leq Ct^{-1/2}\|f\|_q
\]
which completes the proof of \eqref{led}.
\end{proof}

\noindent
{\bf Acknowledgments}.
The author would like to thank Professor Bum Ja Jin for stimulating discussions about her paper \cite{J25}.
He is partially supported by the Grant-in-aid for Scientific Research 25K07083 from JSPS.

\noindent
{\bf Declarations}.
The author states that there is no conflict of interest.
Data sharing not applicable to this article as no datasets were generated or analyzed during the current study.

\end{document}